\newtheorem{thm}{Theorem}
\newtheorem*{thm*}{Theorem}
\newtheorem{lemma}[thm]{Lemma}
\newtheorem*{lemma*}{Lemma}
\newtheorem*{prop*}{Proposition}
\newtheorem{fact}[thm]{Fact}
\newtheorem{conj}[thm]{Conjecture}
\newtheorem{claim}[thm]{Claim}
\theoremstyle{remark}
\newtheorem*{rmk*}{Remark}
\newtheorem*{rmks*}{Remarks}
\newtheorem*{not*}{Notation}
\newtheorem*{claim*}{Claim}
\newtheorem*{fact*}{Fact}
\newtheorem*{conj*}{Conjecture}
\newtheorem{dfn}{Definition}
\newtheorem*{dfn*}{Definition}
\def\C{\mathbb{C}}
\def\E{\mathbb{E}}
\def\F{\mathbb{F}}
\newcommand\ignore[1]{}
\newcommand\ip[1]{\left<#1\right>}
\begin{document}

\title{Equivalence of polynomial conjectures in additive combinatorics}
\author{Shachar Lovett
\thanks{Research supported by the Israel Science Foundation (grant 1300/05).}\\
The Weizmann Institute of Science\\
Faculty of Mathematics and Computer Science\\
POB 26, Rehovot 76100, Israel.\\
Email: shachar.lovett@weizmann.ac.il
}
\maketitle{}

\abstract{
We study two conjectures in additive combinatorics. The
first is the polynomial Freiman-Ruzsa conjecture, which relates to
the structure of sets with small doubling. The second is the inverse
Gowers conjecture for $U^3$, which relates to functions which locally look
like quadratics. In both cases a weak form, with exponential decay
of parameters is known, and a strong form with only a polynomial loss of
parameters is conjectured.
Our main result is that the two conjectures are in fact equivalent.
}

\section{Introduction}
Additive combinatorics studies subsets of abelian groups, with the
main examples are subsets of the integers and of vector spaces
over finite fields. The main problems entail connecting various
properties related to the additive structure of the space, to
structural properties of the subsets. In a way, additive
combinatorics can be viewed as a robust analog of basic linear
algebra.

We study in this paper two conjectures relating to objects defined
over vector spaces $\F^n$. The first is the polynomial
Freiman-Ruzsa conjecture, which relates to subsets $S \subset
\F^n$ which are approximately vector spaces. The second is the
polynomial inverse Gowers conjecture for the $U^3$ norm, which
relates to functions $f:\F^n \to \F$ which are approximately
quadratic. Both conjectures aim to give structural properties for these
objects.

Our main result is that the two conjectures are equivalent. We
focus in the paper on the case of $\F=\F_2$; our results extend
easily to any constant finite field $\F_p$.

\subsection{Approximate vector spaces}
Let $S \subset \F_2^n$. The set $S$ is said to have {\em doubling}
$K$ if $|S+S| \le K|S|$, where $S+S=\{x+y:x,y \in S\}$. It is
clear that $S$ has doubling $1$ iff it is an affine space. Thus, a
set with small doubling can be viewed as an approximate vector
space. Can we infer some structure such sets must have? The
following theorem of Ruzsa~\cite{Ruzsa93analog} claims that any
such set is contained in a vector space which is not much larger.

\begin{thm}[Theorem 1 in ~\cite{Ruzsa93analog}]\label{thm:ruzsa_small_doubling}
Let $S \subset \F_2^n$ such that $|S+S| \le K|S|$. Then $|Span(S)| \le K^2 2^{K^4} |S|$.
\end{thm}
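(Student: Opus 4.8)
The plan is to combine two classical tools, the Plünnecke--Ruzsa sumset inequalities and Ruzsa's covering lemma, exploiting crucially that over $\F_2$ one has $-B = B$, so that $B - B = B + B$ for every set $B$. First I would reduce to the case $0 \in S$: replacing $S$ by a translate $S + s$ with $s \in S$ leaves every size $|kS|$ unchanged and multiplies $|Span(S)|$ by at most $2$, since $Span(S) \subseteq \{0,s\} + Span(S+s)$. Once $0 \in S$, every $kS$ contains $0$, the sets $S \subseteq 2S \subseteq 3S \subseteq \cdots$ form an increasing chain, and $Span(S) = \bigcup_{k\ge 0} kS$.

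The only sumset estimate I need is Plünnecke's inequality applied with $A = B = S$: $|kS| = |S + (k-1)S| \le K^{k-1}|S|$, and in particular $|4S| \le K^3|S|$. Since then $|3S + S| = |4S| \le K^3|S|$, Ruzsa's covering lemma produces a set $T$ with $|T| \le K^3$ such that $3S \subseteq T + (S - S) = T + 2S$, using $S-S = S+S = 2S$ in characteristic $2$. The key claim is that $kS \subseteq \langle T\rangle + 2S$ for all $k \ge 0$: for $k \le 2$ this holds because $0 \in \langle T\rangle \cap 2S$, for $k = 3$ it is the covering inclusion, and the inductive step reads $(k+1)S = S + kS \subseteq S + \langle T\rangle + 2S = \langle T\rangle + (S + 2S) = \langle T\rangle + 3S \subseteq \langle T\rangle + T + 2S = \langle T\rangle + 2S$, the point being that the sumset occurring never grows beyond $2S$. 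Taking the union over $k$ gives $Span(S) \subseteq \langle T\rangle + 2S$, hence $|Span(S)| \le |\langle T\rangle|\cdot|2S| \le 2^{|T|}\cdot K|S| \le 2^{K^3}K|S|$ in the normalized case; undoing the reduction yields $|Span(S)| \le 2^{K^3+1}K|S|$, which is comfortably inside the asserted bound $K^2 2^{K^4}|S|$.

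The step I expect to be most delicate is precisely this ``closing up'': arranging the covering lemma so that $3S$ is covered by boundedly many translates of $2S$ --- not of some larger iterated sumset --- which is exactly what makes the induction terminate without the bounds exploding, and is the source of the $2^{\mathrm{poly}(K)}$-type loss. The remaining work is bookkeeping: verifying the translation reduction and checking that the covering lemma is invoked with the correct ``base'' set so that $B - B$ collapses to $2S$ over $\F_2$, and tracking the exponents.
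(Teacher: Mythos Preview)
The paper does not prove this theorem: it is quoted from Ruzsa's original paper \cite{Ruzsa93analog} and serves only as background, so there is no in-paper argument to compare against. Your proof is the standard one --- Pl\"unnecke--Ruzsa to bound $|4S|$, Ruzsa's covering lemma to obtain $3S \subseteq T + 2S$ with $|T| \le \mathrm{poly}(K)$, and then the ``closing-up'' induction $kS \subseteq \langle T\rangle + 2S$ --- and it is essentially correct.

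Two minor bookkeeping points. First, the commonly stated form of Pl\"unnecke--Ruzsa from $|S+S|\le K|S|$ yields $|kS|\le K^{k}|S|$ rather than $K^{k-1}|S|$; with that exponent one gets $|4S|\le K^{4}|S|$, hence $|T|\le K^{4}$, and the final bound becomes $2K\cdot 2^{K^{4}}|S|$, which is presumably the source of the $K^{4}$ in Ruzsa's stated constant. Second, your closing claim ``comfortably inside the asserted bound'' is not literally true for $K$ close to $1$: at $K=1$ your bound is $4|S|$ while the theorem asserts $2|S|$. This is cosmetic --- for $K\ge 2$ one has $2K\le K^{2}$ and the implication holds, while for $1\le K<2$ the conclusion is immediate by other means --- but as written the constants do not quite close.
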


The work of Ruzsa is an analog of a similar result of
Freiman~\cite{Freiman} for subsets of the integers with small
doubling. Theorem~\ref{thm:ruzsa_small_doubling} was improved in a
series of works (Green and Ruzsa~\cite{GreenRu06},
Sanders~\cite{Sanders08} and Green and
Tao~\cite{GreenTao09:Ruzsa}) to an almost optimal bound.

\begin{thm}[Theorem 1.3 in ~\cite{GreenTao09:Ruzsa}]\label{thm:greentao_ruzsa}
Let $S \subset \F_2^n$ such that $|S+S| \le K|S|$. Then $|Span(S)| \le 2^{(2+o(1))K} |S|$.
\end{thm}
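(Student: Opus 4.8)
The plan is to recast the statement as a combinatorial inequality about families of finite sets and then prove that inequality using compression (shifting) arguments from extremal set theory. Translating $S$ we may assume $0\in S$, and replacing $\F_2^n$ by $Span(S)$ we must show $|Span(S)|\le 2^{(2+o(1))K}|S|$; writing $2^d=|Span(S)|$ and $K=|S+S|/|S|$ this is $d\le\log_2|S|+(2+o(1))K$. (Since $0\in S$ we have $Span(S)=\bigcup_{m\ge1}mS$ and the Pl\"unnecke--Ruzsa inequality gives $|mS|\le K^m|S|$, but this only yields $d\le d\log_2 K+\log_2|S|$, which is useless; the point is to exploit the $\F_2$-structure, not merely the doubling.) Now identify each $x\in\F_2^d$ with its support: $S$ becomes a family $\mathcal S\subseteq 2^{[d]}$, vector addition becomes symmetric difference, $S+S$ becomes $\mathcal S\triangle\mathcal S:=\{F\triangle F':F,F'\in\mathcal S\}$, and $Span(S)=\F_2^d$ becomes the condition that $\mathcal S$ is \emph{$\triangle$-spanning}: no nonempty $T\subseteq[d]$ has $|F\cap T|$ even for every $F\in\mathcal S$. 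From here on I work only with $\mathcal S$.

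Apply the down-compressions $D_i$ ($i\in[d]$) repeatedly. Each $D_i$ preserves $|\mathcal S|$, does not increase $|\mathcal S\triangle\mathcal S|$ (the standard fact that compressions do not increase sumsets, via a short case analysis on the $i$-fibres), and, iterated to stability, turns $\mathcal S$ into a down-set. The one delicate point is that $D_i$ can destroy $\triangle$-spanning; I would check that this happens exactly when coordinate $i$ has become redundant (no $i$-fibre lies entirely in $\mathcal S$), in which case $D_i$ simply deletes coordinate $i$, lowering $d$ by one while preserving $|\mathcal S|$, not increasing $|\mathcal S\triangle\mathcal S|$, and leaving $\mathcal S$ spanning $\F_2^{d-1}$. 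Accounting for these deletions --- a lower-order bookkeeping point --- we reduce to the case that $\mathcal S$ is a $\triangle$-spanning down-set $\mathcal G\subseteq 2^{[d]}$; such a $\mathcal G$ necessarily contains $\emptyset$ and all $d$ singletons.

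It remains to show that a $\triangle$-spanning down-set $\mathcal G\subseteq 2^{[d]}$ satisfies $d\le\log_2|\mathcal G|+(2+o(1))\,|\mathcal G\triangle\mathcal G|/|\mathcal G|$. I would compress further, replacing each layer $\mathcal G\cap\binom{[d]}{k}$ by its colexicographic initial segment; by Kruskal--Katona-type arguments this keeps $\mathcal G$ a down-set, and --- the heart of the matter --- one must show it does not increase $|\mathcal G\triangle\mathcal G|$, so that among down-sets of a given size the Hamming ball $\{F:|F|\le r\}$ minimises the $\triangle$-sumset (note $\mathcal G\triangle\mathcal G$ is again a down-set, equal to $\{F:|F|\le 2r\}$ for such a ball). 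Estimating the resulting binomial sums and optimising over $r\ge1$, the quantity $\log_2|\mathcal G|+2\,|\mathcal G\triangle\mathcal G|/|\mathcal G|$ is minimised near $r=1$ --- the tight configuration $S=\{0,e_1,\dots,e_d\}$, where $K\sim d/2$ --- and there it is at least $d$, the remaining lower-order slack being absorbed into the $o(1)$. This gives the combinatorial inequality and hence the theorem.

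The main obstacle is the extremal-set-theory core just described: proving that simultaneous colexicographic compression of all layers does not increase $|\mathcal G\triangle\mathcal G|$ (so that Hamming balls are extremal), and then extracting the sharp constant $2$ --- with an error term of the correct, small order --- from the binomial optimisation. A secondary technical point is the bookkeeping of the coordinates lost to compression; both require fairly clean quantitative control rather than soft asymptotics.
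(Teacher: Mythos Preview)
The paper does not prove this theorem; it is quoted from Green and Tao~\cite{GreenTao09:Ruzsa} as background, and the paper's own contributions lie entirely in Sections~\ref{sec:u3_to_pfr} and~\ref{sec:pfr_to_u3}. There is therefore no proof here against which to compare your proposal.

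For what it is worth, your sketch is in the spirit of the original Green--Tao argument: identify $\F_2^d$ with $2^{[d]}$, apply down-compressions (which preserve cardinality and do not enlarge sumsets), reduce to a $\triangle$-spanning down-set containing the singletons, and then run an extremal-set-theory argument to extract the constant~$2$. You correctly flag the main gap yourself: the claim that layer-wise colex compression of a down-set does not increase $|\mathcal G\triangle\mathcal G|$, so that Hamming balls are extremal. This is not a routine fact and is not established in your write-up; as written you have an outline with the central lemma left as an assertion. The secondary bookkeeping --- coordinates lost under compression, and the binomial optimisation yielding the sharp constant $2$ with an $o(1)$ error of the right order --- is also left unquantified. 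So the proposal is a plausible plan rather than a proof.
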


The bound is tight up to the $o(1)$ term as can be seen by the following example: let $S=\{v_1,\ldots,v_r\}$
where $v_1,\ldots,v_r \in \F_2^n$ are linearly independent. We have $|S+S| \approx \tfrac{r}{2}|S|$ and $|Span(S)| = 2^r$. We could also have $S=V+\{v_1,\ldots,v_r\}$ where $V$ is a vector space and $v_1,\ldots,v_r \in V^{\perp}$.

This example shows that the exponential loss of parameters in Theorem~\ref{thm:greentao_ruzsa} is inevitable. It would be beneficial, however, to have some structure theorem for sets with small doubling which have only a polynomial loss of parameters.
In general, theorems which involve only a polynomial loss of parameters are useful as they can be
applied iteratively several times, resulting again with only a polynomial loss of parameters. The following strengthening of Theorems~\ref{thm:ruzsa_small_doubling} and~\ref{thm:greentao_ruzsa}, known as the {\em Polynomial
 Freiman-Ruzsa conjecture} was suggested in several works.

\begin{conj}[Polynomial Freiman-Ruzsa conjecture]\label{conj:pfr}
Let $S \subset \F_2^n$ such that $|S+S| \le K|S|$. Then there is a  subset $S' \subset S$, $|S'| \ge K^{-O(1)} |S|$, such that $|Span(S')| \le K^{O(1)} |S|$.
\end{conj}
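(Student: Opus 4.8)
The plan is to reduce Conjecture~\ref{conj:pfr} to a single statement about finding one large subspace inside a higher sumset of $S$, and then to attack that statement by Fourier analysis and almost-periodicity; I should say up front where I expect to get stuck, since this is exactly the point at which all known approaches stall. \emph{Step 1 (reduction to a Bogolyubov--Ruzsa statement).} Suppose $|S+S|\le K|S|$. By the Pl\"unnecke--Ruzsa inequality this upgrades to $|mS-\ell S|\le K^{m+\ell}|S|$ for all fixed $m,\ell$; in particular $|2S-2S|\le K^{4}|S|$ and $|3S-2S|\le K^{5}|S|$. I claim it suffices to produce a subspace $V\subseteq 2S-2S$ with $|V|\ge K^{-O(1)}|S|$. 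Granting this, $S+V\subseteq 3S-2S$, so $S$ meets at most $|S+V|/|V|\le K^{O(1)}$ cosets of $V$; by pigeonhole one of them contains a subset $S'\subseteq S$ with $|S'|\ge K^{-O(1)}|S|$, and $\mathrm{Span}(S')$ lies in the subspace spanned by $V$ together with that coset representative, so $|\mathrm{Span}(S')|\le 2|V|\le 2|2S-2S|\le K^{4}|S|$. This is the covering argument behind Theorem~\ref{thm:ruzsa_small_doubling}, run so as to lose only polynomial factors; everything now comes down to finding one large subspace inside the fourfold sumset of $S$.

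\emph{Step 2 (Bogolyubov's lemma).} By Bogolyubov's lemma, $2S-2S$ contains the subspace $V_{0}=\{x:\gamma\cdot x=0\text{ for all }\gamma\in\mathrm{Spec}\}$, where $\mathrm{Spec}=\{\gamma:|\widehat{1_S}(\gamma)|\ge\tfrac12\,|S|/2^{n}\}$ is the large Fourier spectrum of $S$ on $\F_2^{n}$. Here $\mathrm{codim}(V_{0})=\dim\mathrm{span}(\mathrm{Spec})$, so the bound $|V_{0}|\ge K^{-O(1)}|S|$ would follow from $\dim\mathrm{span}(\mathrm{Spec})\le \log(2^{n}/|S|)+O(\log K)$. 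Chang's lemma bounds this dimension by $O(\log(2^{n}/|S|))$ --- the logarithm of the \emph{inverse density} of $S$, and with a constant strictly bigger than $1$ in front. That is precisely the wrong shape of bound: it is polynomial in $K$ only when $S$ already has density $\ge K^{-O(1)}$, whereas in general the density of $S$ can be astronomically small compared with any function of $K$, and then Chang's lemma yields nothing useful.

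\emph{Step 3 (the obstacle, and what would be needed).} To make the codimension depend on $K$ alone rather than on the density, I would replace the spectral argument by a physical-space almost-periodicity argument --- the Croot--Sisask lemma applied to $1_{A}$ with $A\subseteq S+S$ of doubling $K^{O(1)}$ --- which, iterated in the manner of Sanders, produces a subspace inside $2A-2A$ of codimension $O(\log^{4}K)$, hence a $V$ with $|V|\ge\exp(-O(\log^{4}K))\,|S|$. This is only quasi-polynomial, and the gap between codimension $O(\log^{4}K)$ and the required $O(\log K)$ is exactly where the proof stalls: each averaging step --- the $\rho^{-2}$ in Chang, the $\epsilon^{-2}$ in Croot--Sisask, the depth of the iteration --- contributes its own logarithmic factor, and no additive bookkeeping is known to collapse them. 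I expect that genuinely closing this gap requires abandoning Fourier analysis for an entropic reformulation: replace $S$ by a random variable $U$ uniform on $S$, measure the ``distance to a subgroup'' by the entropic doubling $\mathbb{H}(U_{1}+U_{2})-\mathbb{H}(U)$ of two independent copies, and exploit the \emph{exact} submodularity (Shannon) inequalities --- which, unlike the analytic estimates, carry no logarithmic slack --- to run a monotone iteration driving this defect to $0$ with only a constant-factor loss, then translate the resulting subgroup statement back to Conjecture~\ref{conj:pfr} via a Ruzsa-type covering argument in the entropy setting. Making that final translation lossless would be the second place where I would expect to have to work hard.
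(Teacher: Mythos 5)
You have attempted an \emph{unconditional} proof of Conjecture~\ref{conj:pfr}, but the paper does not claim one: it is stated as a genuine conjecture, and what Section~\ref{sec:u3_to_pfr} establishes is only that it follows from the polynomial inverse Gowers conjecture for $U^3$ (Conjecture~\ref{conj:IGC3}); the paper's main theorem is the \emph{equivalence} of the two conjectures, not a proof of either. Within your own attempt, Step~1's reduction to the polynomial Bogolyubov--Ruzsa statement is correct and standard, your diagnosis of the Chang-lemma obstruction in Step~2 is accurate (the codimension bound scales with $\log(1/\alpha)$ where $\alpha$ is the density of $S$ in its span, and that density may be as small as $2^{-\Theta(K)}$, so the bound degrades to exponential in $K$), and your Step~3 correctly places the Croot--Sisask/Sanders route at quasi-polynomial loss. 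But you acknowledge yourself that the argument stalls: what you have written is an honest survey of the obstacle, not a proof, and no amount of sharpening the spectral or almost-periodicity estimates is known to close the gap.

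The paper's route is completely different and hypothesis-driven. It works with the equivalent formulation Conjecture~\ref{conj:structured_approx_hom}: given $f:\F_2^n\to\F_2^m$ with $|\Delta f|\le K$, find a linear $\ell$ with $|\{f(x)-\ell(x):x\in\F_2^n\}|\le K^{O(1)}$. Setting $F(x,z)=\ip{f(x),z}$ on $\F_2^{n+m}$, it proves $\|F\|_{U^3}\ge K^{-7/8}$ (Lemma~\ref{lem:F_U3}) via an inductive Cauchy--Schwarz count of additive tuples compatible with fixed error values $c_1,c_2,c_3\in\Delta f$ (Claim~\ref{clm:large_set}); it then invokes the polynomial inverse $U^3$ conjecture to obtain a quadratic $Q$ with $\Pr[F=Q]\ge\tfrac12+K^{-O(1)}$; it extracts the bilinear part $x^{T}Az$ of $Q$ by a double Cauchy--Schwarz to deduce $\Pr_{x',x''}[f(x')+f(x'')=\ell(x'+x'')]\ge K^{-O(1)}$ for $\ell(x)=x^{T}A$ (Lemma~\ref{lem:lin_approx}); and it finishes with a Ruzsa covering argument. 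The crucial input, a polynomial inverse $U^3$ theorem, is exactly the ingredient your Fourier-analytic attempt lacks and cannot manufacture; supplying it as a hypothesis and showing the implication both ways is the entire content of the paper, whereas your proposal tacitly asks for an unconditional result that was open at the time.
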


Conjecture~\ref{conj:pfr} was proved by Green and Tao for the
special case when $S$ is a downset~\cite{GreenTao09:Ruzsa}, as
well as in the general case with an exponential loss of parameters
which is better than that given by
Theorem~\ref{thm:greentao_ruzsa}~\cite{GreenTao09:note}.

The Polynomial Freiman-Ruzsa conjecture can be equivalently
restated is several forms. We give below two such forms which
relate to approximate homomorphisms. For proofs of the equivalence
as well as several other equivalent formulations
see~\cite{Green05:finite_field_models}.

The first formulation relates to testing if a function $f:\F_2^n
\to \F_2^m$ is close to a linear map. A natural way to do so is to
sample $x,y \in \F_2^n$ and verify that $f(x+y)=f(x)+f(y)$. The
following conjecture states that if this event occurs with
polynomial $\epsilon$ over the choice of $x,y$, then $f$ is
$poly(\epsilon)$ close to a linear map.

\begin{conj}[Approximate homomorphism testing]\label{conj:approx_hom_testing}
Let $f:\F_2^n \to \F_2^m$ be such that $\Pr_{x,y}[f(x+y)=f(x)+f(y)] \ge \epsilon$. Then there is a linear map $\ell:\F_2^n \to \F_2^m$ such that $\Pr_x[f(x)=\ell(x)] \ge \epsilon^{O(1)}$.
\end{conj}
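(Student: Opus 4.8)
The plan is to derive Conjecture~\ref{conj:approx_hom_testing} from the Polynomial Freiman--Ruzsa conjecture (Conjecture~\ref{conj:pfr}). The first move is to replace the statistical homomorphism condition on $f$ by a statement about a set with small doubling. Consider the graph $S=\{(x,f(x)):x\in\F_2^n\}\subseteq\F_2^{n+m}$, so $|S|=2^n$. Since $(x,f(x))+(y,f(y))=(x+y,f(x)+f(y))$ belongs to $S$ precisely when $f(x+y)=f(x)+f(y)$, the hypothesis says that at least an $\epsilon$-fraction of the pairs in $S\times S$ sum to an element of $S$. Writing $r(t)$ for the number of ways to write $t=s+s'$ with $s,s'\in S$, we get $\sum_{t\in S}r(t)\ge\epsilon|S|^2$, so by Cauchy--Schwarz the additive energy $\sum_t r(t)^2$ is at least $\epsilon^2|S|^3$. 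Applying the Balog--Szemer\'edi--Gowers theorem in a form with polynomial parameter dependence, we obtain $S'\subseteq S$ with $|S'|\ge\epsilon^{O(1)}|S|$ and $|S'+S'|\le\epsilon^{-O(1)}|S'|$.

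Now invoke Conjecture~\ref{conj:pfr} for $S'$, whose doubling is $K=\epsilon^{-O(1)}$: there is a subset $S''\subseteq S'$ with $|S''|\ge\epsilon^{O(1)}|S'|\ge\epsilon^{O(1)}2^n$ contained in a subspace $V:=Span(S'')$ of size $|V|\le\epsilon^{-O(1)}2^n$. Let $\pi:\F_2^{n+m}\to\F_2^n$ be the projection onto the first block of coordinates and $W:=\pi(V)$. Since $S''$ lies in the graph of $f$, $\pi$ is injective on $S''$, so $|W|\ge|S''|\ge\epsilon^{O(1)}2^n$, and hence $W$ has codimension $O(\log(1/\epsilon))$ in $\F_2^n$, while the kernel $K_0:=V\cap(\{0\}^n\times\F_2^m)$ satisfies $|K_0|=|V|/|W|\le\epsilon^{-O(1)}$. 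Choosing a complement $V=U\oplus K_0$, the map $\pi|_U$ is a linear isomorphism onto $W$, giving a linear $\psi:W\to\F_2^m$ with $U=\{(w,\psi(w)):w\in W\}$; extend $\psi$ arbitrarily to a linear map $\tilde\psi:\F_2^n\to\F_2^m$. Every $(x,f(x))\in S''$ decomposes as $(w,\psi(w))+(0,k)$ with $w\in W$, $k\in K_0$, which forces $w=x$ and $f(x)=\tilde\psi(x)+k$; by pigeonhole on $K_0$ there is one $k^\ast\in K_0$ and a set $A\subseteq\F_2^n$ with $|A|\ge|S''|/|K_0|\ge\epsilon^{O(1)}2^n$ such that $f(x)=\tilde\psi(x)+k^\ast$ for all $x\in A$.

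Finally, we absorb the constant shift $k^\ast$ into a linear map. For a uniformly random linear functional $\nu:\F_2^n\to\F_2$ one has $\Pr_\nu[\nu(x)=1]=\tfrac12$ for every nonzero $x$, so some fixed $\nu$ satisfies $|\{x\in A:\nu(x)=1\}|\ge(|A|-1)/2\ge\epsilon^{O(1)}2^n$ (if instead $\epsilon^{O(1)}2^n<1$ the statement is trivial). Then $\ell(x):=\tilde\psi(x)+\nu(x)\,k^\ast$ is linear and agrees with $f$ on $\{x\in A:\nu(x)=1\}$, whence $\Pr_x[\ell(x)=f(x)]\ge\epsilon^{O(1)}$, as desired. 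The converse implication, that Conjecture~\ref{conj:pfr} follows from Conjecture~\ref{conj:approx_hom_testing}, should be obtainable by a similar graph-and-covering argument, so the two conjectures are equivalent.

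I expect the essential difficulty to be the first paragraph: the hypothesis supplies no a priori structure on $f$, only a local identity holding with probability $\epsilon$, and the Balog--Szemer\'edi--Gowers theorem is the tool that turns ``many additive quadruples'' into a genuine small-doubling subset. The delicate point there is quantitative --- one must use a version of BSG with polynomial rather than exponential dependence on $\epsilon$ (which is available), since an exponential loss at that step could not be recovered afterwards. The application of PFR in the second paragraph is a black box, and the remaining steps (projection, pigeonhole, and the $\F_2$ trick of trading an affine map for a linear one at the cost of a factor $2$) are routine.
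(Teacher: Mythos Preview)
Your argument is correct: the graph-of-$f$ construction, followed by Balog--Szemer\'edi--Gowers, then PFR, then the projection/kernel/pigeonhole analysis and the $\nu(x)\,k^\ast$ trick to convert an affine map into a linear one, is exactly the standard derivation of Conjecture~\ref{conj:approx_hom_testing} from Conjecture~\ref{conj:pfr}. One minor simplification: the paper's Theorem~\ref{thm:bsg} is already stated under the hypothesis $\Pr_{s,s'\in S}[s+s'\in S]\ge\epsilon$, so the detour through additive energy and Cauchy--Schwarz is unnecessary here.

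That said, there is nothing in the paper to compare against. Conjecture~\ref{conj:approx_hom_testing} is not proved in the paper; it is merely listed as one of several equivalent reformulations of PFR, with the equivalence of Conjectures~\ref{conj:pfr},~\ref{conj:approx_hom_testing} and~\ref{conj:structured_approx_hom} explicitly deferred to Green's survey~\cite{Green05:finite_field_models}. The paper's own technical contribution is the equivalence of PFR with the polynomial inverse $U^3$ conjecture (Conjecture~\ref{conj:IGC3}): Section~\ref{sec:u3_to_pfr} derives Conjecture~\ref{conj:structured_approx_hom} from Conjecture~\ref{conj:IGC3}, and Section~\ref{sec:pfr_to_u3} sketches the reverse. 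Your write-up is thus a correct proof of the implication PFR $\Rightarrow$ Conjecture~\ref{conj:approx_hom_testing}, matching what the cited survey contains, but the paper itself offers no proof of this particular statement.
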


The second formulations relates to structured approximate homomorphisms. For a function $f:\F_2^n \to \F_2^m$ define its {\em difference set} $\Delta f = \{f(x+y)-f(x)-f(y): x,y \in \F_2^n\}$. The following conjectures claim that if $\Delta f$ is small then $f$ can be expressed as the sum of a linear function
and an error function, where the error function obtains at most $poly(|\Delta f|)$ possible values.
\begin{conj}[Structured approximate homomorphism]\label{conj:structured_approx_hom}
Let $f:\F_2^n \to \F_2^m$ and assume that $|\Delta f| \le K$. Then there is a linear map $\ell:\F_2^n \to \F_2^m$ such that $f(x)=\ell(x)+e(x)$ where
$|\{e(x): x \in \F_2^n\}| \le K^{O(1)}$.
\end{conj}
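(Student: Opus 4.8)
The plan is to derive Conjecture~\ref{conj:structured_approx_hom} from the Polynomial Freiman--Ruzsa conjecture (Conjecture~\ref{conj:pfr}) by passing to the graph of $f$. Given $f:\F_2^n\to\F_2^m$ with $|\Delta f|\le K$, set $S=\{(x,f(x)):x\in\F_2^n\}\subseteq\F_2^{n+m}$, so $|S|=2^n$. First I would check that $S$ has doubling at most $K$: if $z=x+y$ then $f(x)+f(y)=f(z)+d$ where $d=f(x+y)+f(x)+f(y)\in\Delta f$, so $S+S\subseteq S+(\{0\}\times\Delta f)$ and hence $|S+S|\le|\Delta f|\,|S|\le K|S|$.

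Next I would invoke the covering-lemma form of Conjecture~\ref{conj:pfr} --- which is equivalent to the stated form and obtainable from it by applying Ruzsa's covering lemma to $S$ and the large low-span subset $S'$ it provides (see also \cite{Green05:finite_field_models}) --- to get a subspace $V\le\F_2^{n+m}$ with $|V|\le K^{O(1)}|S|=K^{O(1)}2^n$ and a set $X$ with $|X|\le K^{O(1)}$ such that $S\subseteq X+V$. Write $\pi:\F_2^{n+m}\to\F_2^n$ for the projection to the first block of coordinates, and put $P=\pi(V)$ and $U=V\cap(\{0\}\times\F_2^m)$. Since $\pi(S)=\F_2^n$ we obtain $\F_2^n\subseteq\pi(X)+P$, whence $[\F_2^n:P]\le|X|\le K^{O(1)}$; together with $|V|=|P|\,|U|$ this forces $|U|\le K^{O(1)}$.

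Then I would produce the linear map. As $\pi|_V:V\to P$ is a surjective linear map with kernel $U$, it admits a linear section $\sigma:P\to V$, say $\sigma(x)=(x,\psi(x))$ with $\psi:P\to\F_2^m$ linear; extend $\psi$ arbitrarily to a linear map $\ell:\F_2^n\to\F_2^m$. For an arbitrary $x$, write $(x,f(x))=(\xi_1,\xi_2)+v$ with $(\xi_1,\xi_2)\in X$ and $v\in V$; then $x+\xi_1=\pi(v)\in P$, and $v-\sigma(x+\xi_1)\in U$ yields $f(x)+\xi_2+\psi(x+\xi_1)\in U$, i.e.\ $f(x)-\ell(x)\in\xi_2+\ell(\xi_1)+U$. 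Hence $e:=f-\ell$ takes values only in $\bigcup_{(\xi_1,\xi_2)\in X}\bigl(\xi_2+\ell(\xi_1)+U\bigr)$, a set of size at most $|X|\,|U|\le K^{O(1)}$, which is exactly the required conclusion.

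The real content is of course Conjecture~\ref{conj:pfr} itself; everything else is linear algebra. The one delicate point --- and the step where I would expect to need care --- is obtaining the covering form of the conjecture with only a polynomial loss: trying instead to globalize directly from the large subset $S'$ furnished by Conjecture~\ref{conj:pfr}, by expressing a general $x$ as a short sum of elements of $\pi(S')$ and tracking the error terms picked up from $\Delta f$, fails because $\pi(S')$ need not generate $\F_2^n$ in a bounded number of sumsets; routing through Ruzsa's covering lemma (or through the known chain of equivalences) is what keeps the losses polynomial.
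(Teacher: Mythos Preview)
Your argument is correct, but it takes a genuinely different route from the paper. You reduce Conjecture~\ref{conj:structured_approx_hom} to Conjecture~\ref{conj:pfr} via the graph $S=\{(x,f(x))\}$, observe $|S+S|\le K|S|$, invoke the covering form of PFR, and then do linear algebra on the resulting subspace to extract $\ell$; this is essentially the standard equivalence between the two formulations of PFR, and indeed the paper simply cites~\cite{Green05:finite_field_models} for it rather than proving it. The paper's own conditional proof of Conjecture~\ref{conj:structured_approx_hom} (Section~\ref{sec:u3_to_pfr}) instead assumes the polynomial inverse conjecture for $U^3$ (Conjecture~\ref{conj:IGC3}): it encodes $f$ as the scalar function $F(x,z)=\ip{f(x),z}$ on $\F_2^{n+m}$, proves $\|F\|_{U^3}\ge K^{-7/8}$ via an inductive Cauchy--Schwarz construction (Claim~\ref{clm:large_set}), applies the assumed $U^3$ inverse theorem to get a quadratic $Q(x,z)$ correlating with $F$, extracts from the bilinear part of $Q$ a linear map $\ell$ with $\Pr_x[f(x)=\ell(x)+c]\ge K^{-O(1)}$ (Lemma~\ref{lem:lin_approx}), and finishes with a covering argument. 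Your route is shorter and entirely elementary given PFR; the paper's route is the substantive new contribution, since deducing the approximate-homomorphism statement from the $U^3$ inverse theorem (rather than from PFR itself) is precisely the nontrivial direction needed for Theorem~\ref{thm:main}.
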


Note that a-priory, it seems that the assumption of
Conjecture~\ref{conj:structured_approx_hom} is much stronger than
that of Conjecture~\ref{conj:approx_hom_testing}; nevertheless,
the conjectures are equivalent. We also note that analogs of
Conjectures~\ref{conj:approx_hom_testing}
and~\ref{conj:structured_approx_hom} with exponential loss of
parameters follow from Theorem~\ref{thm:greentao_ruzsa}.

There is another natural definition for an approximate vector space; $S \subset \F_2$ is an approximate
vector space if for many pairs $x,y \in S$ we have $x+y \in S$. The following theorem due to Balog,
Szemer\`{e}di and Gowers~\cite{BalogSz94,Gowers98} shows this property is polynomially related to the case of small doubling.

\begin{thm}[Balog-Szemer\`{e}di-Gowers]\label{thm:bsg}
Let $S \subset \F_2^n$.
If $\Pr_{x,y \in S}[x+y \in S] \ge \epsilon$ then there is a subset $S' \subset S$, $|S'| \ge \epsilon^{O(1)} |S|$ such that $S'$ has small doubling, $|S'+S'| \le \epsilon^{-O(1)} |S'|$.
\end{thm}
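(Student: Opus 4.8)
The plan is to reduce the statement to the classical graph-theoretic form of Balog--Szemer\`{e}di--Gowers and then run a short counting argument. Write $n=|S|$. If $n<4/\epsilon$ the claim is trivial: any singleton $S'=\{x_0\}$ with $x_0\in S$ has $|S'|\ge\epsilon^{O(1)}n$ and $|S'+S'|=1=|S'|$. So assume $n$ is large. The hypothesis says there are at least $\epsilon n^2$ ordered pairs $(x,y)\in S\times S$ with $x+y\in S$. Form the graph $G$ on vertex set $S$ in which $u\sim v$ iff $u+v\in S$, and discard the at most $n$ loops; the remaining ordered adjacent pairs number at least $\epsilon n^2-n$ and come in mirror pairs, so $G$ has at least $(\epsilon n^2-n)/2\ge\epsilon n^2/4$ edges.

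The core step is the Balog--Szemer\`{e}di graph lemma: any graph on $n$ vertices with at least $\delta n^2$ edges contains a set $S'$ of vertices with $|S'|\ge\delta^{O(1)}n$ such that every pair $a,b\in S'$, not necessarily distinct, is joined by at least $\delta^{O(1)}n^2$ walks of length $3$. Applying this with $\delta=\epsilon/4$ yields $S'\subseteq S$ with $|S'|\ge\epsilon^{O(1)}n$ and the stated abundance of short walks.

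Walks are then converted into additive representations. A walk $a\sim z_1\sim z_2\sim b$ in $G$ certifies that $s_1:=a+z_1$, $s_2:=z_1+z_2$ and $s_3:=z_2+b$ all lie in $S$; in characteristic $2$ one has $s_1+s_2+s_3=a+b$, and $(z_1,z_2)$ is recoverable from $(s_1,s_2,s_3)$, so distinct walks give distinct triples. Hence every $t\in S'+S'$ has at least $\epsilon^{O(1)}n^2$ representations $t=s_1+s_2+s_3$ with all $s_i\in S$. Since the representation sets for distinct $t$ are disjoint and all lie in $S\times S\times S$, we obtain $|S'+S'|\cdot\epsilon^{O(1)}n^2\le|S|^3=n^3$, so $|S'+S'|\le\epsilon^{-O(1)}n\le\epsilon^{-O(1)}|S'|$, which is the conclusion.

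All of the difficulty sits in the graph lemma, and the delicate requirement is \emph{polynomial} dependence on $\epsilon$ in both $|S'|$ and the walk count --- the original Balog--Szemer\`{e}di argument went through the Szemer\`{e}di regularity lemma and lost tower-type factors. The efficient proof is a dependent random choice: pass to the set of vertices of degree at least $(\epsilon/8)n$ (there are at least $(\epsilon/8)n$ of them), pick a uniformly random vertex $w$, and work inside its neighbourhood. A second-moment estimate shows that for a good $w$ this neighbourhood is large while few of its pairs have a small common neighbourhood in $G$, and a short two-step routing argument then upgrades ``most pairs are well connected'' to ``every pair of a slightly smaller set is joined by $\epsilon^{O(1)}n^2$ walks of length $3$''. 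Tracking the exponents and checking that the pruning loses only an $\epsilon^{O(1)}$ factor is the one genuinely technical point; everything else is bookkeeping.
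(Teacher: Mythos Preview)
The paper does not prove this theorem at all; it is quoted as a known result with citations to Balog--Szemer\'edi and Gowers, and is used as a black box in Section~3. So there is no ``paper's own proof'' to compare against.

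That said, your proposal is exactly the standard Gowers argument that the citation \cite{Gowers98} points to: pass to the Cayley-type graph on $S$, invoke the path/graph form of BSG (via dependent random choice) to find a large $S'$ in which every pair is joined by $\epsilon^{O(1)}|S|^2$ paths of length~$3$, translate each path into a representation $a+b=s_1+s_2+s_3$ with $s_i\in S$, and double-count triples to bound $|S'+S'|$. The reduction steps and the counting are correct; the only part left at the level of a sketch is the graph lemma itself, which you correctly identify as the real work and outline accurately.

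Two small remarks. First, you overload $n$ for $|S|$, clashing with the paper's ambient $\F_2^n$; pick another symbol. Second, in the recoverability step you should say that $(z_1,z_2)$ is determined by $(a,b,s_1,s_2,s_3)$ (via $z_1=a+s_1$, $z_2=b+s_3$), not by the triple alone --- but since $a,b$ are fixed when you count representations of $t=a+b$, this does not affect the argument.
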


\subsection{Approximate polynomials}
Let $f:\F_2^n \to \F_2$ be a function.
Define the derivative of $f$ in direction $y \in \F_2^n$ as
$f_y(x) = f(x+y)+f(x)$
\footnote{Over odd fields define $f_y(x) = f(x+y)-f(x)$.}. If $f$ is a degree $d$ polynomial then $f_y$ is a polynomial of degree at most $d-1$. Define iterated derivatives as
$$
f_{y_1,\ldots,y_d}(x) = (f_{y_1,\ldots,y_{d-1}})_{y_d}(x) = \sum_{I \subseteq [d]}f(x+\sum_{i \in I} y_i)
$$
and observe that $f$ is a polynomial of degree at most $d-1$ iff $f_{y_1,\ldots,y_d}(x) \equiv 0$
for all $y_1,\ldots,y_d \in \F_2^n$.
On the other hand, if $f$ is a random boolean function then $f_{y_1,\ldots,y_d}(x)$ is distributed close
to uniform over $\F_2$. Thus, a plausible definition for an approximate polynomial is a function $f$ for which
$\Pr_{x,y_1,\ldots,y_d}[f_{y_1,\ldots,y_d}(x)=0] \ge 1/2+\epsilon$. This is captured by the {\em Gowers norm},
defined originally by Gowers~\cite{Gowers98} in his seminal work on a new proof for Szemer\'{e}di's theorem.
The Gowers norm is defined over complex functions $F:\F_2^n \to \C$ (think of $F(x) = (-1)^{f(x)}$). Define the derivative of $F$ in direction $y \in \F_2^n$ as $F_y(x) = F(x+y) \overline{F(x)}$, and iterated derivatives analogously.

\begin{dfn}[Gowers norm]
Let $F:\F_2^n \to \C$. The $d$-th Gowers norm of $F$ is defined as
$$
\|F\|_{U^d} = \left( \E_{x,y_1,\ldots,y_d \in \F_2^n}[F_{y_1,\ldots,y_d}(x)] \right)^{1/2^d}.
$$
\end{dfn}

The following summarize some simple facts regarding  the Gowers norm.

\begin{fact}[Simple facts regarding the Gowers norm]\label{lem:gowers}
Let $f:\F_2^n \to \F_2$.
\begin{enumerate}
\item $\|\cdot\|_{U^d}$ is a norm of complex functions (for $d=1$
it is a seminorm).
\item $0 \le \|(-1)^f\|_{U^d} \le 1$.
\item $\|(-1)^f\|_{U^d}=1$ iff $f$ is a polynomial of degree at most $d-1$.
\item If $f$ is a random boolean function then $\|(-1)^f\|_{U^d} \approx 0$.
\item Assume there is a polynomial $p(x)$ of degree at most $d-1$ such that $\Pr_x[f(x)=p(x)] \ge \frac{1+\epsilon}{2}$. Then $\|(-1)^f\|_{U^d} \ge \epsilon$.
\end{enumerate}
\end{fact}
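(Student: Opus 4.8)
The plan is to derive all five facts from the definition together with two standard structural properties: the \emph{inductive identity}
$$\|F\|_{U^d}^{2^d} = \E_{y\in\F_2^n}\|F_y\|_{U^{d-1}}^{2^{d-1}},$$
which is immediate from $F_{y_1,\ldots,y_d}=(F_{y_d})_{y_1,\ldots,y_{d-1}}$, and the \emph{Gowers--Cauchy--Schwarz inequality}. For the latter I would first introduce the Gowers inner product of a family $(F_\omega)_{\omega\in\{0,1\}^d}$ of functions $\F_2^n\to\C$,
$$\langle(F_\omega)\rangle_{U^d}=\E_{x,y_1,\ldots,y_d}\prod_{\omega\in\{0,1\}^d}\mathcal{C}^{|\omega|}F_\omega\Big(x+\sum_{i=1}^d\omega_i y_i\Big),$$
where $\mathcal{C}$ is complex conjugation, so that $\|F\|_{U^d}^{2^d}=\langle(F)\rangle_{U^d}$; then $d$ successive applications of the Cauchy--Schwarz inequality, one in each variable $y_i$ (each time splitting the product according to $\omega_i\in\{0,1\}$ and duplicating that coordinate), give $|\langle(F_\omega)\rangle_{U^d}|\le\prod_{\omega}\|F_\omega\|_{U^d}$.

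\emph{Item (1).} Homogeneity $\|cF\|_{U^d}=|c|\,\|F\|_{U^d}$ is direct from the definition. Nonnegativity of $\|F\|_{U^d}^{2^d}$ follows by induction on $d$ from the inductive identity, with base case $\|F\|_{U^1}^2=|\E_x F(x)|^2\ge0$. The triangle inequality is obtained by expanding $\|F+G\|_{U^d}^{2^d}=\langle(F+G)\rangle_{U^d}$ into the $2^{2^d}$ Gowers inner products of the families whose $\omega$-th entry is $F$ or $G$, bounding each by Gowers--Cauchy--Schwarz, and summing to get $(\|F\|_{U^d}+\|G\|_{U^d})^{2^d}$. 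For $d=1$ this is only a seminorm, since $\|F\|_{U^1}=|\E F|$ vanishes on any mean-zero $F\neq0$; for $d\ge2$ positive-definiteness follows from the identity $\|F\|_{U^2}^4=\sum_\xi|\widehat F(\xi)|^4$ together with the monotonicity $\|F\|_{U^2}\le\|F\|_{U^d}$, which itself follows by induction from the inductive identity and Jensen's inequality.

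\emph{Items (2)--(5), specialized to $F=(-1)^f$.} For (2), $\|F\|_{U^d}\ge0$ is part of (1), and $\|F\|_{U^d}\le1$ follows by induction from the inductive identity, using $\|(-1)^f\|_{U^1}=|\E(-1)^f|\le1$ and that $F_y=(-1)^{f_y}$ is again $\pm1$-valued. For (3), note $F_{y_1,\ldots,y_d}(x)=(-1)^{f_{y_1,\ldots,y_d}(x)}\in\{\pm1\}$, so the average defining $\|F\|_{U^d}^{2^d}$ equals $1$ iff $f_{y_1,\ldots,y_d}\equiv0$ for all $y_1,\ldots,y_d$, which the text records as equivalent to $\deg f\le d-1$. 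For (4), take the expectation over a uniformly random $f$ of $\|F\|_{U^d}^{2^d}=\E_{x,y}\prod_\omega(-1)^{f(x+\sum_i\omega_iy_i)}$; for fixed $x,y$ the inner expectation over $f$ is $0$ unless every point of the affine cube $\{x+\sum_{i\in I}y_i:I\subseteq[d]\}$ occurs with even multiplicity, which cannot happen when $y_1,\ldots,y_d$ are linearly independent, so $\E_f\|F\|_{U^d}^{2^d}\le\Pr_{x,y}[y_1,\ldots,y_d\text{ dependent}]\le 2^{d-n}$, and Markov's inequality then gives $\|(-1)^f\|_{U^d}=o(1)$ with probability $1-o(1)$. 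For (5), put $g=f-p$ (so $f-p=f+p$ over $\F_2$); since $\deg p\le d-1$ makes $(-1)^p$ have trivial $d$-th iterated derivative, $\|(-1)^f\|_{U^d}=\|(-1)^g\|_{U^d}$, and by monotonicity $\|(-1)^g\|_{U^d}\ge\|(-1)^g\|_{U^1}=|\E(-1)^g|=|2\Pr_x[g(x)=0]-1|\ge\epsilon$.

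\emph{Main obstacle.} The only step needing genuine care is the Gowers--Cauchy--Schwarz inequality underlying the triangle inequality and the monotonicity: one must set up the $d$ successive Cauchy--Schwarz steps with the correct substitutions and verify that the final product is exactly $\prod_\omega\|F_\omega\|_{U^d}$. Everything else is a routine unwinding of the definition.
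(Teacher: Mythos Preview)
The paper does not actually prove this Fact; it is stated as a list of well-known properties of the Gowers norm and then the exposition moves on. Your argument is correct and is the standard one found in the literature: set up the Gowers inner product, establish the Gowers--Cauchy--Schwarz inequality by $d$ iterated applications of ordinary Cauchy--Schwarz, and derive the triangle inequality, monotonicity, and the remaining items from it together with the inductive identity. One small remark: your justification of monotonicity (``by induction from the inductive identity and Jensen'') is terse; the clean one-line version is to apply Cauchy--Schwarz in the $x$-variable to $\|F\|_{U^{d-1}}^{2^{d-1}}=\E_{y_1,\ldots,y_{d-1}}\E_x[\cdots]$ and then substitute $x'=x+y_d$, which yields $\|F\|_{U^{d-1}}^{2^d}\le\|F\|_{U^d}^{2^d}$ directly (equivalently, this is Gowers--Cauchy--Schwarz with half of the $F_\omega$ set to the constant function $1$). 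Everything else in your write-up is fine as stated.
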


The hard direction is proving structure theorems for functions
with noticeable Gowers norm. This is known as the {\em inverse
Gowers conjecture}. Let $f:\F_2^n \to \F_2$ be a function for
which $\|(-1)^f\|_{U^d} \ge \epsilon$. The conjecture speculates
there exists a polynomial $p(x)$ of degree at most $d-1$ for which
$\Pr_x[f(x)=p(x)] \ge \frac{1}{2}+\epsilon'$, where $\epsilon'$
may depend on $\epsilon$ and $d$, but crucially it does not depend
on the number of variables $n$. Much is known today about the
inverse Gowers conjecture. We summarize below the current state of
affairs.

\begin{fact}[Inverse Gowers conjecture]\label{thm:igc}
Let $f:\F_2^n \to \F_2$ such that $\|f\|_{U^d} \ge \epsilon$ for
$d \le 3$. Then there exists a polynomial $p(x)$ of degree at most
$d-1$ such that $\Pr[f(x)=p(x)] \ge \tfrac{1}{2}+\epsilon'$, where
\begin{itemize}
\item $d=1$: It is easy to verify that $\|(-1)^f\|_{U^1} = |\E[(-1)^f]|$. This gives $\epsilon' = \epsilon/2$.
\item $d=2$: It can be shown by simple Fourier analysis that the $U^2$ norm of $(-1)^f$ is equal to the $L_4$ norm of the Fourier coefficients of $f$, that is $\|f\|_{U^2} = \|\hat{f}\|_4$. This  gives $\epsilon' \ge \Omega(\epsilon^2)$.
\item $d=3$: This case is more involved. Results of Green and Tao~\cite{GreenTao08:inverse_U3} and
Samorodnitsky~\cite{Samorodnitsky07} give that in this case $\epsilon' \ge exp(-1/\epsilon)$.
\end{itemize}
When $d \ge 4$ things become trickier. It is no longer true that if $\|f\|_{U^d} \ge \epsilon$
there must exist a polynomial $p:\F_2^n \to \F_2$ of degree at most $d-1$ which approximates $f$
with probability noticeably larger than $1/2$~\cite{LovettMeSa08,GreenTao07:polynomials_finite_fields}.
Nevertheless, a refined inverse conjecture holds: there exists a "non-classical" polynomial $F:\F_2^n \to \C$ which
approximates $f$. The notion of approximation is the natural generalization of our
previous definition,  $\E_x[(-1)^{f(x)} F(x)] \ge \epsilon'$. A "non-classical" polynomial is a function
$F:\F_2^n \to \C$ for which $F_{y_1,\ldots,y_d}(x) \equiv 1$ (for example, $F(x)=i^{x_1+\ldots+x_n}$ is a "non-classical" quadratic). This was proved by Bergelson, Tao and Ziegler~\cite{BergelsonTaZi09, TaoZi09} using Ergodic theory. A major caveat of this approach is that currently
no explicit bound on $\epsilon'$ in terms of $\epsilon$ and $d$ is known. All that is known is that $\epsilon'$
is some constant depending only on $\epsilon,d$.
\end{fact}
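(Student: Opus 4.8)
The plan is to handle the four ranges of $d$ separately, since each relies on a distinct set of tools; in every case the goal is to convert a lower bound on $\|(-1)^f\|_{U^d}$ into an honest polynomial of degree at most $d-1$ correlating with $f$ (a non-classical one when $d \ge 4$).

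For $d=1$ I would just unwind the definition: $\|(-1)^f\|_{U^1}^2 = \E_{x,y}[(-1)^{f(x+y)+f(x)}] = \big(\E_x (-1)^{f(x)}\big)^2$, so $|\E_x (-1)^{f(x)}| \ge \epsilon$, and the more frequent of the two constants $0,1$ agrees with $f$ on a $\tfrac12+\tfrac{\epsilon}{2}$ fraction of inputs. For $d=2$ I would pass to the Fourier side: expanding the definition gives $\|(-1)^f\|_{U^2}^4 = \sum_{\alpha} \hat f(\alpha)^4$ with $\hat f(\alpha) = \E_x[(-1)^{f(x)+\alpha\cdot x}]$, and since $\sum_\alpha \hat f(\alpha)^4 \le \big(\max_\alpha |\hat f(\alpha)|\big)^2 \sum_\alpha \hat f(\alpha)^2 = \big(\max_\alpha |\hat f(\alpha)|\big)^2$ by Parseval, some affine function $x \mapsto \alpha\cdot x$ (or its complement) has $|\hat f(\alpha)| \ge \epsilon^2$ and hence agrees with $f$ on a $\tfrac12 + \tfrac{\epsilon^2}{2}$ fraction of points; this yields $\epsilon' = \Omega(\epsilon^2)$.

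The substantive case is $d=3$, where I would follow Green--Tao~\cite{GreenTao08:inverse_U3} and Samorodnitsky~\cite{Samorodnitsky07}. The starting point is the derivative identity $\|(-1)^f\|_{U^3}^{8} = \E_y \|(-1)^{f_y}\|_{U^2}^{4}$, so for an $\Omega(\epsilon^8)$ fraction of directions $y$ the derivative $f_y$ has $U^2$-norm at least $\Omega(\epsilon^2)$ and, by the $d=2$ step, correlates with a linear function $\beta(y)\cdot x$. The heart of the argument is to show that the (multi-valued) assignment $y \mapsto \beta(y)$ behaves like a \emph{linear} map on a large subset of $\F_2^n$; this is where additive combinatorics enters, via a Cauchy--Schwarz popularity argument together with the Balog--Szemer\`{e}di--Gowers theorem (Theorem~\ref{thm:bsg}) and a Freiman--Ruzsa covering step (Theorem~\ref{thm:greentao_ruzsa}). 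One then ``integrates'' $\beta$ to obtain a genuine quadratic $q$ for which $(-1)^{f+q}$ still has noticeable $U^2$-norm, and the $d=2$ step finishes. The bound $\epsilon' \ge \exp(-1/\epsilon)$ is forced precisely because the only Freiman--Ruzsa estimate presently available is exponential; replacing it by the Polynomial Freiman-Ruzsa conjecture would give $\epsilon' \ge \epsilon^{O(1)}$, and making this reduction precise is one direction of the equivalence established in this paper. Finally, for $d \ge 4$ the classical statement fails, and I would cite Bergelson--Tao--Ziegler~\cite{BergelsonTaZi09,TaoZi09}, whose ergodic-theoretic proof produces a correlating non-classical polynomial but is ineffective, so no explicit $\epsilon'$ is obtained.

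The main obstacle is thus entirely inside the $d=3$ case, and specifically the passage from the approximate linearity of $y \mapsto \beta(y)$ to an exact quadratic: this is where the quantitative loss is concentrated and where the structural conjectures of the previous subsection are needed.
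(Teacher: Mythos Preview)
Your proposal is correct and follows essentially the same approach as the paper: the paper treats this as a Fact with the justifications embedded in the statement itself (direct computation for $d=1$, the identity $\|(-1)^f\|_{U^2}=\|\hat f\|_4$ for $d=2$, and citations to Green--Tao/Samorodnitsky and Bergelson--Tao--Ziegler for $d=3$ and $d\ge 4$), and your write-up simply fleshes out those same sketches. In particular your $d=3$ outline matches the more detailed sketch the paper gives in Section~\ref{sec:pfr_to_u3}, including the identification of the Freiman--Ruzsa step as the sole source of exponential loss.
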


The only case where there is an explicit relation between $\epsilon'$ and $\epsilon$ which is not polynomial is the case of $\|\cdot\|_{U^3}$, where it is believed to be suboptimal. The following polynomial relation
is conjectured.

\begin{conj}[Polynomial Inverse Gowers conjecture for $U^3$]\label{conj:IGC3}
Let $f:\F_2^n \to \F_2$. If $\|f\|_{U^3} \ge \epsilon$ then
there exists a quadratic polynomial $p(x)$ such that $\Pr[f(x)=p(x)] \ge \tfrac{1}{2} + \epsilon^{O(1)}$.
\end{conj}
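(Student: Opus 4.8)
The plan is to derive the Polynomial Inverse Gowers conjecture for $U^3$ (Conjecture~\ref{conj:IGC3}) assuming the Polynomial Freiman-Ruzsa conjecture (Conjecture~\ref{conj:pfr}), together with its equivalent formulations, in particular Conjecture~\ref{conj:structured_approx_hom}. The starting point is the known qualitative inverse theorem for $U^3$ (the $d=3$ case of Fact~\ref{thm:igc}, due to Green--Tao and Samorodnitsky): if $\|(-1)^f\|_{U^3} \ge \epsilon$, then there is a function $\phi:\F_2^n \to \F_2^n$ such that $\phi(x)\cdot y$ correlates, on average over a noticeable fraction of $y$, with the derivative $f_y(x)$; concretely one extracts from the largeness of the $U^3$ norm that $\E_y[\,|\E_x (-1)^{f_y(x) + \phi(y)\cdot x}|^2\,] \ge \epsilon^{O(1)}$. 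The map $y \mapsto \phi(y)$ should be thought of as the "linear part of the derivative", and the crux of the classical proof is an arithmetic-regularity/additive-combinatorics argument showing that $\phi$ is an \emph{approximate homomorphism}: for a polynomial fraction of pairs $(y,y')$ one has $\phi(y+y') = \phi(y)+\phi(y')$. This step only needs the machinery already cited and produces a bound of the conjectured polynomial shape; the exponential loss in the current $U^3$ inverse theorem enters only at the next stage.

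Next I would feed this approximate homomorphism into the additive-combinatorial input. By Theorem~\ref{thm:bsg} (Balog--Szemer\'edi--Gowers) applied to the graph of $\phi$ inside $\F_2^n \times \F_2^n$, the large number of additive quadruples forces a large subset on which $\phi$ genuinely has small doubling, and then Conjecture~\ref{conj:pfr} (or rather its reformulation Conjecture~\ref{conj:structured_approx_hom}) upgrades $\phi$, after passing to a large subset and modifying it on a small part of the range, to a genuine \emph{linear} map $M:\F_2^n \to \F_2^n$ with $\phi(y) = My + e(y)$ where $e$ takes only $\epsilon^{-O(1)}$ values. Intuitively, once the linear part of the derivative of $f$ is \emph{itself} linear in the direction $y$, the "second derivative" $(M+M^T)$ is a fixed symmetric bilinear form, so $f$ behaves like a quadratic whose Hessian is $M+M^T$; one symmetrizes $M$ if necessary and the error $e$ is absorbed because it takes few values. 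The precise bookkeeping — passing between the various equivalent forms of PFR, controlling the subset sizes, and handling the $\F_2$ subtlety that a symmetric matrix need not be a Hessian of a quadratic (the diagonal obstruction) — is routine but must be done carefully.

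Finally I would conclude: having identified a candidate symmetric bilinear form $B(x,y)$ such that $f_y(x)$ agrees with $B(x,y) + (\text{linear in } x) + (\text{lower-order in } y)$ for many $(x,y)$, a standard "integration" step (choosing a quadratic $q$ with $q_y(x) = B(x,y) + \ell(x)$, which is possible over $\F_2$ up to correcting the diagonal by a linear term) shows that $f - q$ has a large $U^2$ norm, hence by the $d=2$ case of Fact~\ref{thm:igc} correlates with an affine function, and therefore $f$ correlates with a quadratic polynomial to advantage $\epsilon^{O(1)}$. This is exactly Conjecture~\ref{conj:IGC3}.

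The main obstacle I anticipate is the quantitative version of the "approximate homomorphism" step: one must show, with only polynomial loss, that the map $\phi$ extracted from the $U^3$ norm is additively structured — i.e. that $\phi(y+y')=\phi(y)+\phi(y')$ for a polynomial density of pairs. In the classical (exponential) proof this is where iterated Cauchy--Schwarz and a regularity argument are invoked and bounds degrade; the entire point of assuming PFR is that it lets us \emph{avoid} any lossy structural step \emph{after} this point, but we still need the additive quadruples for $\phi$ to appear with polynomial density purely from $\|(-1)^f\|_{U^3}\ge\epsilon$. Establishing that, and then threading the constants through Theorem~\ref{thm:bsg} and Conjecture~\ref{conj:structured_approx_hom} without any hidden superpolynomial blowup, is the technical heart of the argument; the final "integration to a quadratic" and the reduction to the $U^2$ case are comparatively mechanical.
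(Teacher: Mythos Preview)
Your proposal is correct and follows essentially the same route as the paper: extract for each direction $y$ a linear form $\ell^{(y)}$ approximating the derivative $f_y$, show (with only polynomial loss, as in Samorodnitsky's Lemma~6.7) that $y\mapsto\ell^{(y)}$ is an approximate homomorphism, apply Balog--Szemer\'edi--Gowers to its graph, replace the Ruzsa step by the polynomial Freiman--Ruzsa conjecture to obtain a genuine linear map $L$, and then integrate to a quadratic. Your anticipated obstacle is not one: the approximate-homomorphism step already has polynomial bounds in the existing proof, and the paper confirms that the \emph{only} superpolynomial loss in Samorodnitsky's argument occurs at the Ruzsa step, which PFR replaces.
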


Our main result is that the Polynomial Freiman-Ruzsa conjecture
and the Polynomial Inverse Gowers conjecture for $U^3$ are
equivalent.

\begin{thm}\label{thm:main}
Conjecture~\ref{conj:pfr} and Conjecture~\ref{conj:IGC3} are equivalent.
\end{thm}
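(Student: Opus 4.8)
The plan is to prove the two implications separately; in each, the unconditional machinery loses an exponential factor at one or two steps, and the conjecture being assumed is used precisely to make those steps polynomial.

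\textbf{From Conjecture~\ref{conj:pfr} to Conjecture~\ref{conj:IGC3}.} I would run the Green--Tao/Samorodnitsky proof of the $U^3$ inverse theorem (the $d=3$ case of Fact~\ref{thm:igc}), tracking polynomial dependencies and substituting, for every Freiman--Ruzsa-type step, the equivalent ``approximate homomorphism testing'' form of the Polynomial Freiman--Ruzsa conjecture (Conjecture~\ref{conj:approx_hom_testing}, whose equivalence with Conjecture~\ref{conj:pfr} is cited). Concretely: from $\|(-1)^f\|_{U^3}\ge\epsilon$, expand $\|(-1)^f\|_{U^3}^{8}=\E_y\|(-1)^{f_y}\|_{U^2}^{4}$ and apply Markov to find, for an $\epsilon^{O(1)}$ fraction of directions $y$, a frequency $\phi(y)$ with which $(-1)^{f_y}$ correlates to level $\epsilon^{O(1)}$. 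A Gowers--Cauchy--Schwarz computation (using again that $\|(-1)^f\|_{U^3}$ is large) shows $\phi$ behaves like a homomorphism on an $\epsilon^{O(1)}$ fraction of additive quadruples $a+b=c+d$; Theorem~\ref{thm:bsg} gives a large subset on which $\phi$ has small doubling; and Conjecture~\ref{conj:approx_hom_testing} upgrades this to: $\phi$ agrees with a genuine linear map $M$ on an $\epsilon^{O(1)}$ fraction of its domain. Hence $f_y(x)$ correlates with the bilinear form $\ip{My,x}$ for many $y$, and after symmetrizing $M$ (with the usual $\F_2$ subtlety, the diagonal ``non-classical'' carry term, handled as in the $U^3$ literature) one extracts a genuine quadratic $q$ with $\E_x[(-1)^{f(x)+q(x)}]\ge\epsilon^{O(1)}$. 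The only real work here is bookkeeping: isolating exactly which steps lose exponentially and checking that the assumed conjecture (in its various equivalent forms) repairs all of them.

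\textbf{From Conjecture~\ref{conj:IGC3} to Conjecture~\ref{conj:pfr}: setup.} I would work with the ``structured approximate homomorphism'' form (Conjecture~\ref{conj:structured_approx_hom}): given $f:\F_2^n\to\F_2^m$ with $|\Delta f|\le K$, produce linear $\ell$ and $e$ with $|\{e(x):x\in\F_2^n\}|\le K^{O(1)}$ and $f=\ell+e$. Write $d(x,y):=f(x+y)+f(x)+f(y)$, so $\Delta f$ is the image of $d$. After normalizing $f(0)=0$ and subtracting the linear map $x\mapsto\sum_i x_i f(e_i)$ — which leaves $d$ (hence $\Delta f$) unchanged and forces the remainder to take values in the subgroup $H:=\langle\Delta f\rangle$, with $\dim H\le K$ — we may assume $f:\F_2^n\to H$. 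If $|H|\le K^{O(1)}$ we are done with $\ell=0$, $e=f$, so assume $|H|$ is large. The crucial step is a $U^3$ lower bound: with $g_a(z):=d(z,a)\in\Delta f$ one has $f_{a,b,c}(x)=g_a(x)+g_a(x+b)+g_a(x+c)+g_a(x+b+c)$, a sum of four elements of $\Delta f$; pigeonholing $g_a$ onto its most frequent value and using the elementary bound $\|\mathbf{1}_B\|_{U^2}^{4}\ge\E[\mathbf{1}_B]^{4}$ gives $\Pr_{x,b,c}[f_{a,b,c}(x)=0]\ge K^{-4}$ for every $a$, hence
$$\E_{\lambda\in\F_2^m}\ \|(-1)^{\lambda\cdot f}\|_{U^3}^{8}\ =\ \Pr_{x,a,b,c}[f_{a,b,c}(x)=0]\ \ge\ K^{-4}.$$
Since the $\lambda$ annihilating the image of $f$ form $H^\perp$, of relative density $|H|^{-1}$ which is tiny, some $\lambda$ with $\lambda\cdot f\not\equiv0$ has $\|(-1)^{\lambda\cdot f}\|_{U^3}\ge K^{-O(1)}$, and Conjecture~\ref{conj:IGC3} then yields a quadratic $p_\lambda$ with $\Pr_x[\lambda\cdot f(x)=p_\lambda(x)]\ge\tfrac12+K^{-O(1)}$.

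\textbf{The main obstacle.} What remains — and where I expect the real difficulty to lie — is to bootstrap this weak correlation of a single combination $\lambda\cdot f$ with a quadratic into the full decomposition $f=\ell+e$. The plan is a dimension reduction on $H$: exploit $p_\lambda$ (whose coboundary is a bilinear form) together with the fact that $d$ is globally $K$-valued to ``straighten'' the $\lambda$-component of $f$ after restricting to a subspace of bounded codimension, peeling off one dimension of $H$ while keeping $K$ fixed, and iterate. The danger is that a naive induction through the $\le K$ dimensions of $H$ compounds a $K^{O(1)}$ loss per step into a super-polynomial bound, so the real content is either to make each step remove a constant fraction of $\dim H$ (yielding $O(\log K)$ steps) or to carry out the reduction in a single pass: the displayed estimate already furnishes quadratic approximants for a $K^{-O(1)}$ fraction of $\lambda$ at once, and their quadratic and linear parts should depend approximately linearly on $\lambda$ (since $\lambda\mapsto\lambda\cdot f(x)$ is exactly linear), so one can try to feed that approximate bilinearity back in. One must also control the quadratic-versus-affine dichotomy for the $p_\lambda$ — the constraint $|\Delta f|\le K$ should force any genuinely quadratic $p_\lambda$ to have bounded rank, and a bounded-rank quadratic factors through boundedly many linear forms — all while keeping every estimate polynomial. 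This bootstrapping is the crux; the reductions and the $U^3$ lower bound above are comparatively routine.
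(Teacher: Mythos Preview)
Your forward direction (Conjecture~\ref{conj:pfr} $\Rightarrow$ Conjecture~\ref{conj:IGC3}) matches the paper's: run the Samorodnitsky/Green--Tao $U^3$ argument and replace the single exponential step (Ruzsa's theorem applied to the span of the derivative frequencies) by the polynomial Freiman--Ruzsa conjecture.

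For the reverse direction your setup is sound and you correctly locate the obstacle, but you do not resolve it, and the two concrete plans you sketch (peel off one coordinate of $H$ at a time; or collect quadratics $p_\lambda$ for many $\lambda$ and try to linearize in $\lambda$ afterwards) both run into the aggregation loss you yourself flag. The paper sidesteps the whole issue with one move: instead of fixing $\lambda$ and applying the $U^3$ inverse theorem to the slice $\lambda\cdot f:\F_2^n\to\F_2$, apply it \emph{once} to the joint function
\[
F(x,z)=\ip{f(x),z}:\F_2^{n+m}\to\F_2.
\]
One shows $\|(-1)^F\|_{U^3}\ge K^{-O(1)}$ (this is not quite your estimate: here derivatives are also taken in the $z$-variables, so $\|(-1)^F\|_{U^3}^8$ equals the probability that \emph{several} second and third $f$-derivatives vanish simultaneously, and the paper handles this with a short inductive Cauchy--Schwarz claim). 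Conjecture~\ref{conj:IGC3} then produces a quadratic $Q(x,z)=x^{T}Az+Q_1(x)+Q_2(z)$ with $\Pr[\ip{f(x),z}=Q(x,z)]\ge\tfrac12+K^{-O(1)}$. The bilinear piece $x^{T}Az$ already \emph{is} the sought linear map: setting $\ell(x)=x^{T}A$, two applications of Cauchy--Schwarz kill $Q_1,Q_2$ and give $\Pr_{x',x''}[f(x')+f(x'')=\ell(x'+x'')]\ge K^{-O(1)}$, whence (pigeonholing on $\Delta f$) $\Pr_x[f(x)=\ell(x)+c]\ge K^{-O(1)}$; a standard covering argument then bounds the image of $f-\ell$ by $K^{O(1)}$. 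This is precisely the ``single pass exploiting exact linearity in $\lambda$'' you were reaching for --- the point is that linearity in $\lambda$ becomes a bilinear-in-$(x,z)$ term of a single quadratic on the product space, so one invocation of the inverse theorem suffices and no iteration (with its compounding losses) is needed.
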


One direction is simple. The only place in the proof of the
inverse conjecture for $U^3$ where a super-polynomial loss occurs
is in the use of the Ruzsa theorem. Assuming the polynomial
Freiman-Ruzsa conjecture this loss can be avoided. We sketch the
required change in the proof in Section~\ref{sec:pfr_to_u3}.

The main innovation of this work is a proof of the polynomial
Freiman-Ruzsa conjecture assuming a polynomial inverse theorem for
$U^3$. We prove this in Section~\ref{sec:u3_to_pfr}.

We note that this result was also independently discovered by Green and Tao~\cite{greentao09:equiv}.

\section{Deducing the polynomial Freiman-Ruzsa conjecture, assuming a polynomial
inverse conjecture for $U^3$}\label{sec:u3_to_pfr}

We will prove Conjecture~\ref{conj:structured_approx_hom}, which
is equivalent to the polynomial Freiman-Ruzsa conjecture. Let
$f:\F_2^n \to \F_n^m$ be a function and let $\Delta f = \{f(x+y)-f(x)-f(y):x,y
\in \F_2^n\}$. We assume $|\Delta f| \le K$, and wish to prove that
there exists a linear map $\ell:\F_2^n \to \F_2^m$ such that
$|\{f(x)-\ell(x): x \in \F_2^n\}| \le K^{O(1)}$.

Define a function $F:\F_2^{n+m} \to \F_2$ by $F(x,z) =
\ip{f(x),z}$ for $x \in \F_2^n, z \in \F_2^m$, where $\ip{\cdot,\cdot}$ denotes inner product. The proof will
proceed in the following steps.
\begin{enumerate}
\item Show that $\|F\|_{U^3} \ge K^{-O(1)}$.
\item By the polynomial inverse conjecture for $U^3$, there is a
quadratic polynomial $Q(x,z)$ such that $\Pr[F(x,z)=Q(x,z)] \ge
\tfrac{1}{2} + K^{-O(1)}$.
\item Deduce there is a linear map $\ell:\F_2^n \to \F_2^m$ such that $\Pr[f(x)=\ell(x)+c] \ge K^{-O(1)}$ for some $c \in \F_2^m$.
\item Conclude by showing that $|\{f(x)-\ell(x): x \in \F_2^n\}| \le K^{O(1)}$.
\end{enumerate}

\begin{lemma}\label{lem:F_U3}
$\|F\|_{U^3} \ge K^{-7/8}$.
\end{lemma}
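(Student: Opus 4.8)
The plan is to directly estimate the $U^3$ Gowers norm of $F$ from below by expanding the definition and exploiting the structure $F(x,z) = \ip{f(x),z}$. Recall that
\[
\|F\|_{U^3}^{8} = \E_{x, y_1,y_2,y_3, z, w_1,w_2,w_3}\left[(-1)^{F_{(y_1,w_1),(y_2,w_2),(y_3,w_3)}(x,z)}\right],
\]
where the derivatives are taken in the group $\F_2^{n+m}$. The key observation is that $F$ is \emph{linear} in the $z$-variable: for fixed $x$, the map $z \mapsto \ip{f(x),z}$ is linear. Consequently, taking a derivative in a direction $(y,w)$ and then averaging over $z$ and the $w_i$'s should collapse a great deal of the expression. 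Concretely, I would first carry out the average over $w_1, w_2, w_3$ and $z$: since each appears linearly inside the inner products, the average over, say, $z$ of $(-1)^{\ip{\cdot, z}}$ is $1$ if the accumulated coefficient vector vanishes and $0$ otherwise. This will force a system of linear constraints among the quantities $f(x + \sum_{i \in I} y_i)$ for $I \subseteq \{1,2,3\}$.

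The upshot of that computation should be an expression of the form
\[
\|F\|_{U^3}^{8} = \E_{x,y_1,y_2,y_3}\Big[\mathbf{1}\big[\,f_{y_1,y_2,y_3}(x) = 0\,\big]\Big] = \Pr_{x,y_1,y_2,y_3}\big[f_{y_1,y_2,y_3}(x) = 0 \text{ in } \F_2^m\big],
\]
i.e.\ the probability that the third iterated additive derivative of $f$ vanishes. Now I bring in the hypothesis $|\Delta f| \le K$. Writing $g(x,y) = f(x+y)-f(x)-f(y) \in \Delta f$, one checks that $f_{y_1,y_2,y_3}(x)$ can be expressed as a signed sum of a bounded number of elements of $\Delta f$ (for instance, $f_{y_1,y_2}(x) = g(x+y_1, y_2) - g(x,y_2) + g(y_1,y_2) - \text{(terms)}$, and one more derivative keeps everything inside $\pm$ combinations of $\Delta f$). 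Since $\Delta f$ has at most $K$ elements, the random variable $f_{y_1,y_2,y_3}(x)$ — being a sum of boundedly many elements drawn from a set of size $K$ — takes at most $K^{c}$ values for some small constant $c$, and in particular is $0$ with probability at least $K^{-c}$ provided $0$ is its most likely value; more carefully, $f_{y_1,y_2,y_3}(x)$ lies in the sumset $\Delta f + \Delta f + \cdots$ of boundedly many copies, which has size at most $K^{c}$, so by pigeonhole \emph{some} value is attained with probability $\ge K^{-c}$, and by the standard shift/Cauchy--Schwarz trick for Gowers norms (the derivative being symmetric enough) one reduces to the value $0$. Tracking the constants, the target $K^{-7/8}$ for $\|F\|_{U^3}$, i.e.\ $K^{-7}$ for $\|F\|_{U^3}^{8}$, should come out, with the exponent $7$ reflecting that $f_{y_1,y_2,y_3}$ is a combination of $7$ (or so) difference-set elements.

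The main obstacle I anticipate is the bookkeeping in two places. First, the reduction from "$f_{y_1,y_2,y_3}(x)$ lands in a small set" to "$f_{y_1,y_2,y_3}(x) = 0$ with noticeable probability": one cannot simply pigeonhole, because the most popular value need not be $0$, so one needs an averaging argument over a random shift of one of the $y_i$ (or a Cauchy--Schwarz step) to symmetrize and pin the mass onto $0$; getting this to cost only a bounded power of $K$ is the delicate point. Second, writing $f_{y_1,y_2,y_3}(x)$ explicitly as a sum of exactly the right number of $\Delta f$-elements, so that the final exponent is $7/8$ and not something worse, requires a careful expansion of the iterated derivative and cancelling the "pure" $f$-terms in pairs. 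Neither step is conceptually hard, but the constant-chasing is where the stated bound $K^{-7/8}$ is won or lost.
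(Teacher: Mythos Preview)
Your computation of $\|F\|_{U^3}^8$ after averaging over the $\F_2^m$-variables is incorrect. Averaging over $z$ does force $\sum_{I\subseteq[3]} f(x+\sum_{i\in I}y_i)=0$, i.e.\ $f_{y_1,y_2,y_3}(x)=0$; but averaging over each $w_j$ imposes a \emph{further} constraint, namely $\sum_{I\ni j} f(x+\sum_{i\in I}y_i)=0$, which is the second derivative $f_{y_a,y_b}(x+y_j)=0$ for $\{a,b\}=[3]\setminus\{j\}$. Thus
\[
\|F\|_{U^3}^8 \;=\; \Pr_{x,y_1,y_2,y_3}\bigl[A_0=A_1=A_2=A_3=0\bigr],
\]
a simultaneous system of four conditions (one third derivative and three shifted second derivatives), not the single event $f_{y_1,y_2,y_3}(x)=0$. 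Your pigeonhole/Cauchy--Schwarz outline only targets $A_0$, and gives no mechanism for making the three second derivatives vanish at the same time; the ``shift one $y_i$ and Cauchy--Schwarz'' step you describe would indeed pin $A_0$ to $0$ with probability $\ge K^{-O(1)}$, but it does nothing for $A_1,A_2,A_3$.

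The paper handles exactly this difficulty: rather than attacking the derivatives directly, it builds inductively in $k$ a set $S_k\subset(\F_2^n)^{k+1}$ of density $\ge K^{-(2^k-1)}$ on which $f(x+\sum_{i\in I}y_i)=f(x)+\sum_{i\in I}(f(y_i)+c_i)$ holds for \emph{every} $I\subseteq[k]$, for some fixed constants $c_i$. The induction step is precisely a Cauchy--Schwarz doubling in the new variable $y_{k+1}$ followed by a pigeonhole over $\Delta f$---the same two ingredients you invoke---but organised so that for any $(x,y_1,y_2,y_3)\in S_3$ all four $A_i$ vanish simultaneously, since each $f(y_i)$ and each $c_i$ occurs an even number of times in every $A_j$. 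This yields $\|F\|_{U^3}^8\ge K^{-7}$, and is where the exponent $7=2^3-1$ actually comes from. The missing idea in your sketch is to enforce this full ``locally affine'' structure on $f$ over a large set, rather than to force a single iterated derivative to hit $0$.
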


\begin{proof}
We compute $\|F\|_{U^3}^8$. Let $x,y_1,y_2,y_3 \in \F_2^n$ and $z,w_1,w_2,w_3 \in \F_2^m$ be chosen uniformly. We have
\begin{align*}
\|F\|_{U^3}^8 &= \E[(-1)^{\sum_{I \subseteq [3]} F(x + \sum_{i \in I}y_i, z + \sum_{i \in I}w_i)}] \\
&= \E[(-1)^{\sum_{I \subseteq [3]} \ip{f(x + \sum_{i \in I}y_i),z + \sum_{i \in I}w_i}}] \\
& = \E[(-1)^{\ip{z,A_0}+\ip{w_1,A_1}+\ip{w_2,A_2}+\ip{w_3,A_3}}]
\end{align*}
where
\begin{align*}
&A_0 = \sum_{I \subseteq \{1,2,3\}} f(x + \sum_{i \in I} y_i)\\
&A_1 = \sum_{I \subseteq \{2,3\}} f(x+y_1+\sum_{i \in I} y_i)\\
&A_2 = \sum_{I \subseteq \{1,3\}} f(x+y_2+\sum_{i \in I} y_i)\\
&A_3 = \sum_{I \subseteq \{1,2\}} f(x+y_3+\sum_{i \in I} y_i)\\
\end{align*}
Hence we have
\begin{align*}
\|F\|_{U^3}^8 & = \E[(-1)^{\ip{z,A_0}+\ip{w_1,A_1}+\ip{w_2,A_2}+\ip{w_3,A_3}}] \\
&= Pr_{x,y_1,y_2,y_3 \in \F_2^n}[A_0=0, A_1=0, A_2=0, A_3=0].
\end{align*}

The proof will follow from the following general claim.

\begin{claim}\label{clm:large_set}
For any  $k \ge 1$ there exist values $c_1,\ldots,c_k \in \F_2^m$
such that the set
$$
S_k=\{(x,y_1,\ldots,y_k) \in (\F_2^n)^{k+1}: \forall I \subseteq
[k],\ f(x + \sum_{i \in I}y_i) = f(x) + \sum_{i \in I} f(y_i) +
\sum_{i \in I} c_i\}
$$
has relative size at least $\frac{|S_k|}{2^{n(k+1)}} \ge
(1/K)^{2^k-1}$.
\end{claim}

Before proving the claim we show how it can be applied to conclude the proof of Lemma~\ref{lem:F_U3}.
Let $c_1,c_2,c_3 \in \F_2^m$ be values and let
$$
S_{3}=\{(x,y_1,y_2,y_3) \in (\F_2^n)^4: \forall I \subseteq [3],\ f(x + \sum_{i \in I}y_i) = f(x) + \sum_{i \in I} f(y_i) + \sum_{i \in I} c_i\}.
$$
such that its relative size is $\frac{|S_{3}|}{2^{4n}} \ge (1/K)^7$. Notice that if $(x,y_1,y_2,y_3) \in S_{3}$ then
$A_0=A_1=A_2=A_3=0$ as each variable appears an even number of times in each of $A_0,A_1,A_2,A_3$.
Thus we conclude that
$$
\|F\|_{U^3}^8 = \Pr[A_0=A_1=A_2=A_3=0] \ge (1/K)^7.
$$
\end{proof}

We now turn  to prove the claim.
\begin{proof}[Proof of Claim~\ref{clm:large_set}]
The proof will be by induction on $k$. For $k=1$ this follows since $f(x+y)-f(x)-f(y) \in \Delta$ for all $x,y \in \F_2^n$ and $|\Delta| \le K$. Assume the claim holds for $k$, and we will prove it for $k+1$. Let $c_1,\ldots,c_k \in \F_2^m$ be such that
$$
S_k=\{(x,y_1,\ldots,y_k) \in (\F_2^n)^{k+1}: \forall I \subseteq
[k],\ f(x + \sum_{i \in I}y_i) = f(x) + \sum_{i \in I} f(y_i) +
\sum_{i \in I} c_i\}
$$
has relative size at least $\frac{|S_k|}{2^{n(k+1)}} \ge
(1/K)^{2^k-1}$. Consider the set
$$
S'=\{(x,y_1,\ldots,y_k,y_{k+1}) \in (\F_2^n)^{k+2}:
(x,y_1,\ldots,y_k) \in S_k \textrm{ and }
(x+y_{k+1},y_1,\ldots,y_k) \in S_k\}
$$
By the Cauchy-Schwartz inequality its relative size is lower bounded by $(1/K)^{2^k-2}$, as
\begin{align*}
\frac{|S'|}{2^{n(k+2)}} &= \E_{x,y_1,\ldots,y_k,y_{k+1}}[\mathbf{1}_{(x,y_1,\ldots,y_k) \in S_k} \cdot \mathbf{1}_{(x+y_{k+1},y_1,\ldots,y_k) \in S_k}] \\
&= \E_{x,z,y_1,\ldots,y_k}[\mathbf{1}_{(x,y_1,\ldots,y_k) \in S_k} \cdot \mathbf{1}_{(z,y_1,\ldots,y_k) \in S_k}] \\
& = \E_{y_1,\ldots,y_k}[\E_x[\mathbf{1}_{(x,y_1,\ldots,y_k) \in S_k}]^2] \\
& \ge (\E_{x,y_1,\ldots,y_k}[\mathbf{1}_{(x,y_1,\ldots,y_k) \in S_k}])^2 \\
&= \left( \frac{|S_k|}{2^{n(k+1)}} \right)^2 \ge (1/K)^{2^k-2}.
\end{align*}
Fix $c_{k+1} \in \Delta$ such that $\Pr_{(x,y_1,\ldots,y_{k+1} )\in S'}[f(x+y_{k+1})-f(x)-f(y_{k+1})=c_{k+1}] \ge 1/K$.
The required set $S_{k+1}$ is chosen to be\begin{align*}
S_{k+1} = \{(x,y_1,\ldots,y_{k+1}) \in (\F_2^n)^{k+2}: &(x,y_1,\ldots,y_{k+1}) \in S' \textrm{ and }\\ &f(x+y_{k+1})-f(x)-f(y_{k+1})=c_{k+1}\}.
\end{align*}
Observe that the relative size of $S_{k+1}$ is as required,
$$
\frac{|S_{k+1}|}{2^{n(k+2)}} \ge \frac{1}{K} \cdot
\frac{|S'|}{2^{n(k+2)}} \ge (1/K)^{2^{k+1}-1}.
$$
Let $(x,y_1,\ldots,y_{k+1}) \in S_{k+1}$. We need to show that
$f(x+\sum_{i \in I}y_i) = f(x) + \sum_{i \in I}y_i + \sum_{i \in
I} c_i$. If $(k+1) \notin I$ this follow by induction from the
assumption on $S_k$. Otherwise, using the fact that
$(x+y_{k+1},y_1,\ldots,y_k) \in S_k$ we get that
$f(x+y_{k+1}+\sum_{i \in I}y_i) = f(x+y_{k+1}) + \sum_{i \in I \setminus \{k+1\}}f(y_i)
+ \sum_{i \in I \setminus \{k+1\}} c_i$, and since
$f(x+y_{k+1})=f(x)+f(y_{k+1})+c_{k+1}$ we conclude the proof.
\end{proof}

Using Lemma~\ref{lem:F_U3} and the polynomial inverse conjecture for the Gowers $U^3$ norm, we get
there is a quadratic polynomial such that $\Pr[\ip{f(x),z}=Q(x,z)] \ge \tfrac{1}{2}+K^{-O(1)}$.

\begin{lemma}\label{lem:lin_approx}
Let $Q(x,z)$ be a quadratic polynomial such that $\Pr[\ip{f(x),z}=Q(x,z)] \ge \tfrac{1}{2}+\epsilon$.
Then there exist a linear mapping $\ell:\F_2^n \to \F_2^m$ and a constant $c \in \F_2^m  $ such that
$$
\Pr[f(x)=\ell(x)+c] \ge \epsilon^4/K.
$$
\end{lemma}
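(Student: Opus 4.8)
The plan is to analyze the structure of the quadratic polynomial $Q(x,z)$ by viewing it, for each fixed $x$, as a function of $z$. Since $Q$ is quadratic, we can write $Q(x,z) = \ip{L(x),z} + B(z,z) + \ip{v,z} + q(x)$, where $L:\F_2^n \to \F_2^m$ is a linear map capturing the bilinear cross-terms between $x$ and $z$, the form $B(z,z)$ collects the pure quadratic-in-$z$ part, $v$ is a fixed linear-in-$z$ term, and $q(x)$ is the part of $Q$ depending only on $x$. The key point is that the pure-in-$z$ part $B(z,z)+\ip{v,z}$ does not depend on $x$ at all; so if we define $g(z) = B(z,z)+\ip{v,z}$, then for a random fixed $x$ we have $\ip{f(x),z} = \ip{L(x),z} + g(z) + q(x)$ with probability noticeably more than $1/2$ over $z$.

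First I would make this precise: by averaging, there is a set of $x$'s of density at least $\epsilon$ (say) for which $\Pr_z[\ip{f(x),z} = \ip{L(x)+?,z}+g(z)+q(x)] \ge \tfrac12 + \tfrac{\epsilon}{2}$ — more carefully, using $\|\widehat{h}\|_\infty$-type reasoning on the function $z \mapsto (-1)^{\ip{f(x),z}+g(z)}$, whose correlation with the character $z\mapsto (-1)^{\ip{a,z}}$ measures how close $\ip{f(x),\cdot}$ is to $\ip{a,\cdot}+g(\cdot)$. The crucial structural fact is that a boolean function on $\F_2^m$ cannot be simultaneously close to two \emph{distinct} affine functions of $z$ (the correlation with any single character is large, but two characters are orthogonal, so a density-$>\tfrac12$ agreement pins down the character uniquely once $g$ is subtracted). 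Hence for each such good $x$ there is a \emph{unique} $a(x) \in \F_2^m$ with $\Pr_z[\ip{f(x),z} = \ip{a(x),z}+g(z)+q(x)]$ large; writing $\ip{f(x),z} = \ip{a(x),z} + g(z) + q(x)$ as an identity in $z$ forces $g \equiv 0$ and $q(x)=0$ (a nonzero affine-in-$z$ function has no character correlation above $1/2$), so in fact $f(x) = a(x)$ for all good $x$, where $a(x) = L(x)+\text{const}$ is affine.

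Then I would extract the linear map: since $a(x) = L(x)+c$ agrees with $f(x)$ on a set of $x$'s of density $\ge \epsilon^{O(1)}$ (tracking the constants through the two averaging steps, which is where the $\epsilon^4/K$ comes from — one factor of $\epsilon$ for selecting good $x$, squarings from Cauchy--Schwarz or from the $L_4$/Parseval passage when we argue the character is unique, and the $1/K$ for pinning down one value of $q(x)$ or one coset among the $\le K$ possibilities coming from $\Delta f$), we take $\ell = L$ and this same $c$, giving $\Pr_x[f(x)=\ell(x)+c] \ge \epsilon^4/K$. The value $K$ enters because $q(x)$, though a priori a scalar polynomial in $x$, interacts with the $\le K$ possible values of the "defect" coming from $\Delta f$, and we fix the most popular one by pigeonhole.

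The main obstacle I expect is the \emph{uniqueness} step — showing that the affine-in-$z$ function matched to $\ip{f(x),\cdot}$ is forced, and that the $x$-independent part $g$ must vanish. The danger is that for different $x$ the "$g$" could absorb different things, or that $Q$ might not decompose as cleanly as claimed if one is careless about which variables the cross terms involve; one must genuinely use that $\ip{f(x),z}$ is \emph{linear} in $z$ (degree exactly $1$ in the $z$-variables) to kill the pure quadratic part $B(z,z)$ and conclude $g \equiv 0$, rather than merely bounding it. Getting the quantitative bookkeeping so that the final loss is exactly $\epsilon^4/K$ (rather than something worse) will require being economical: doing the character-uniqueness argument via a single application of Parseval on $\F_2^m$ rather than iterating Cauchy--Schwarz, and folding the selection of the good $x$-set and the good constant $c$ into one pigeonhole where possible.
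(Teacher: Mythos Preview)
Your plan has a real gap at the ``uniqueness/identity'' step. You claim that from
$\Pr_z[\ip{f(x),z} = \ip{L(x),z} + g(z) + q(x)] > \tfrac12$ one can force $g \equiv 0$ and upgrade the agreement to an identity in $z$. This is false: take $g(z)=z_1 z_2$; then $\Pr_z[g(z)=0]=3/4$, so with $f(x)=L(x)$ and $q(x)=0$ you have $3/4$ agreement while $g\not\equiv 0$. The same example kills your uniqueness claim for $a(x)$: since $\Pr_z[z_1 z_2 = 0] = \Pr_z[z_1 z_2 = z_1] = 3/4$, the quadratic $g$ is simultaneously close to two distinct characters, so the ``best'' $a(x)$ is not determined. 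In short, fixing $x$ and arguing in $z$ alone cannot kill the pure-$z$ quadratic part by any identity or uniqueness argument.

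The paper sidesteps this completely: writing $Q(x,z)=x^T A z + Q_1(x) + Q_2(z)$, it applies Cauchy--Schwarz \emph{twice}, once in $x$ and once in $z$, to the correlation $\E[(-1)^{\ip{f(x),z}+Q(x,z)}]$. In the fourfold product $G(x',z')G(x'',z')G(x',z'')G(x'',z'')$ both $Q_1$ and $Q_2$ cancel exactly, leaving $\E_{x',x'',w}[(-1)^{\ip{f(x')+f(x'')+\ell(x'+x''),\,w}}]\ge \epsilon^4$ with $\ell(x)=x^T A$; averaging over $w$ gives $\Pr_{x',x''}[f(x')+f(x'')=\ell(x'+x'')]\ge \epsilon^4$, and then a single pigeonhole over the $\le K$ values of $f(x'+x'')-f(x')-f(x'')\in\Delta f$ yields $\Pr_x[f(x)=\ell(x)+c]\ge \epsilon^4/K$. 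Your decomposition is essentially the same as the paper's; what you are missing is the second Cauchy--Schwarz (in $z$) that removes $g$. Incidentally, your fixed-$x$ approach \emph{can} be repaired, but not by uniqueness: observe that $\E_x\bigl|\widehat{(-1)^g}\bigl(f(x)-L(x)\bigr)\bigr|\ge 2\epsilon$, square via Cauchy--Schwarz, and use Parseval $\sum_\alpha |\widehat{(-1)^g}(\alpha)|^2=1$ to conclude $\max_c \Pr_x[f(x)=L(x)+c]\ge 4\epsilon^2$ --- this is a different and in fact sharper route than the paper's, but it is not the argument you wrote.
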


\begin{proof}
Let $Q(x,z) = x^T A z + Q_1(x) + Q_2(z)$, where $A$ is an $n \times m$ matrix and $Q_1,Q_2$ are quadratics
just in the $x$ and $z$ variables. We have
$$
|\E[(-1)^{\ip{f(x),z}+Q(x,z)}]| = 2 \Pr[\ip{f(x),z}=Q(x,z)]-1 \ge \epsilon.
$$
We will use the following simple claim, which follows by applying the Cauchy-Schwartz inequality twice: for any  function $G(x,z)$ the following holds$$
G(x,z)^4 \le \E_{x',x'',z',z''} G(x',z')G(x'',z')G(x',z'')G(x'',z'').
$$
Let $G(x,z)=(-1)^{\ip{f(x),z}+Q(x,z)}$. Note that
$$
G(x',z')G(x'',z')G(x',z'')G(x'',z'') = (-1)^{\ip{f(x')+f(x''),z'+z''} + (x'+x'')^T A (z'+z'')}
$$
hence we deduce, by setting $w=z'+z''$ that
$$
\E_{x',x'',w}[(-1)^{\ip{f(x')+f(x''),w}+(x'+x'')^T A w}] \ge \epsilon^4.
$$
Let $\ell:\F_2^n \to \F_2^m$ be the linear mapping defined by $A$, that is  $\ell(x) = x^T A$. Note that we have
$$
\E_{x',x'',w}[(-1)^{\ip{f(x')+f(x'')+\ell(x'+x''),w}}] \ge \epsilon^4,
$$
hence
$$
\Pr_{x',x''}[f(x')+f(x'')=\ell(x'+x'')] \ge \epsilon^4.
$$
We are nearly done. To complete the proof we use the fact that $f(x'+x'')-f(x')-f(x'') \in \Delta f$
to deduce that there is some $c \in \Delta f$ such that
$$
\Pr_{x',x''}[f(x'+x'')=f(x')+f(x'')+c|f(x')+f(x'')=\ell(x'+x'')] \ge 1/K,
$$
hence we got the required result,
$$
\Pr_{x}[f(x)=\ell(x)+c] \ge \epsilon^4/K.
$$
\end{proof}

We finish the proof by a standard covering argument.

\begin{lemma} Assume there is a linear map $\ell:\F_2^n \to \F_2^m$ and $c \in \F_2^n$ such that
$\Pr[f(x)=\ell(x)+c] \ge \epsilon$. Then $|\{f(x)-\ell(x): x \in \F_2^n\}| \le K^2/\epsilon$.
\end{lemma}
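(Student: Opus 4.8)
The plan is a standard Ruzsa-type covering argument. Write $g(x) = f(x) - \ell(x)$; since $\ell$ is linear we have $g(x+y) - g(x) - g(y) = f(x+y) - f(x) - f(y)$, so $\Delta g = \Delta f$ and in particular $|\Delta g| \le K$. The hypothesis says that $T := \{x \in \F_2^n : g(x) = c\}$ has size $|T| \ge \epsilon 2^n$, and the goal is to bound the image $\{g(x) : x \in \F_2^n\} = \{f(x) - \ell(x) : x \in \F_2^n\}$.

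First I would extract a small covering set. Choose $W \subseteq \F_2^n$ maximal with respect to the property that the translates $\{w + T : w \in W\}$ are pairwise disjoint. Since these translates lie inside $\F_2^n$ and are disjoint, $|W| \cdot |T| \le 2^n$, hence $|W| \le 1/\epsilon$. By maximality of $W$, for every $x \in \F_2^n$ the translate $x + T$ meets $w + T$ for some $w \in W$; writing $x + t_1 = w + t_2$ with $t_1, t_2 \in T$ and using $-T = T$ in characteristic $2$, this gives $x \in w + T + T$. Thus $\F_2^n = \bigcup_{w \in W}(w + T + T)$.

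It then remains to bound $g$ on each piece $w + T + T$, which is done by telescoping through the difference set. For $w \in W$ and $t_1, t_2 \in T$ we get $g(w + t_1) = g(w) + g(t_1) + \delta_1 = g(w) + c + \delta_1$ with $\delta_1 \in \Delta g$, and then $g(w + t_1 + t_2) = g(w + t_1) + g(t_2) + \delta_2 = g(w) + \delta_1 + \delta_2$ with $\delta_2 \in \Delta g$ (the two copies of $c$ cancel). Hence $g$ restricted to $w + T + T$ takes values in $g(w) + \Delta g + \Delta g$, a set of size at most $K^2$. Summing over $w \in W$, the image of $g$ has size at most $|W| \cdot K^2 \le K^2/\epsilon$, which is the claim.

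I do not expect a genuine obstacle here; the only points needing care are the identity $\Delta g = \Delta f$ and the characteristic-$2$ bookkeeping (that $-T = T$ and that $c$ cancels in the telescoping). One could equivalently phrase the covering step as Ruzsa's covering lemma applied to the graph $A = \{(x, g(x)) : x \in \F_2^n\} \subseteq \F_2^{n+m}$, which satisfies $A + A \subseteq \{(y, g(y) + \delta) : y \in \F_2^n,\ \delta \in \Delta g\}$ and hence has doubling at most $K$; covering $A$ by translates of $(T \times \{c\}) - (T \times \{c\}) = (T+T) \times \{0\}$ and projecting onto the last $m$ coordinates in fact yields the slightly stronger bound $K/\epsilon$, but $K^2/\epsilon$ is all that is needed downstream.
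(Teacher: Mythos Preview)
Your proof is correct and essentially identical to the paper's: both pick a maximal family of disjoint translates of $T$ to obtain $\F_2^n = W + T + T$ (the paper calls your $W$ by $B$), and then use two applications of the $\Delta f$ relation to see that the image of $f-\ell$ lies in $\{g(w):w\in W\}+\Delta f+\Delta f$. The only cosmetic difference is that you pass to $g=f-\ell$ at the outset, which cleans up the bookkeeping slightly.
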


\begin{proof}
Let $T=\{x \in \F_2^n: f(x)=\ell(x)+c\}$. Let $B \subset \F_2^n$ be maximal such that
for any distinct $b',b'' \in B$ the sets $T+b'$ and $T+b''$ are disjoint. Clearly $|B| \le 1/\epsilon$.
Let $x \in \F_2^n$ be arbitrary. By the maximality of $B$ we have that $(T+x) \cap (T+B) \ne \emptyset$, hence we get that $x \in T+T+B$. Let $x=t'+t''+b$ for $t',t'' \in T$ and $b \in B$. We have $f(x)=f(t')+f(t'')+f(b)+r$ for $r \in \Delta f+\Delta f$. Thus we have
\begin{align*}
f(x)-\ell(x)&=f(t'+t''+b)-\ell(t'+t''+b) \\
& = (f(t')-\ell(t'))+(f(t'')-\ell(t''))+(f(b)-b)) + r \\
&= c+c+(f(b)-b)+r.
\end{align*}
Let $B' = \{f(b)-b: b \in B\}$. We got that $\{f(x)-\ell(x): x \in \F_2^n\} \subset \Delta f+\Delta f+B'$, and
an obvious upper bound is $|\Delta f+\Delta f+B'| \le |\Delta f|^2 |B'| \le K^2/\epsilon$.
\end{proof}

\section{Deducing a polynomial inverse theorem for $U^3$,
assuming the polynomial Freiman-Ruzsa
conjecture}\label{sec:pfr_to_u3}

We follow the proof of the inverse theorem for $U^3$ of
Samorodnitsky~\cite{Samorodnitsky07}. Let $f:\F_2^n \to \F_2$ be a
function such $\|(-1)^f\|_{U^3} \ge \epsilon$. The proof proceeds
as follows.

\begin{enumerate}
\item For $\epsilon^{O(1)}$ fraction of $y \in \F_2^n$ we have that $\|(-1)^{f_y}\|_{U^2} \ge
\epsilon^{O(1)}$ (Corollary 6.2).
\item Using the inverse theorem for $\|\cdot\|_{U^2}$, there exist
linear maps $\ell^{(y)}:\F_2^n \to \F_2$ such that
$\Pr_{x,y}[f_y(x) = \ell^{(y)}(x)] \ge \epsilon^{O(1)}$.
\item Crucially, one can choose the linear maps to
behave linearly in $y$, $\Pr_{y,z}[\ell^{(y+z)} \equiv \ell^{(y)}
+ \ell^{(z)}] \ge \epsilon^{O(1)}$ (Lemma 6.7).
\item Define $S=\{(y,\ell^{(y)}): y \in \F_2^n\} \subset \F_2^{2n}$.
We have $\Pr_{a,b \in S}[a+b \in S] \ge \epsilon^{O(1)}$.
\item By the Balog-Szemer\`{e}di-Gowers theorem there exists $S'
\subset S$ such that $|S'| \ge \epsilon^{O(1)}|S|$ and $|S'+S'|
\le \epsilon^{-O(1)}$.
\item Originally, Ruzsa's theorem was used to deduce that $Span(S')
\le exp(1/\epsilon^4) |S'|$. We replace it by the polynomial
Freiman-Ruzsa conjecture to deduce there is $S'' \subset S'$ such
that $|S''| \ge \epsilon^{O(1)} |S'|$ and $|Span(S'')| \le
\epsilon^{-O(1)} |S''|$.
\item $S''$ can be used to construct a global linear map $L:\F_2^n \to
\F_2^n$ such that $\Pr_{y}[\ell^{(y)} = L(y)] \ge \epsilon^{O(1)}$
(Discussion following Theorem 6.9).
\item The remainder of the proof shows how to integrate $L$ to find a
quadratic $q(x)$ such that $\Pr[q(x)=f(x)] \ge \tfrac{1}{2} +
\epsilon^{O(1)}$ (Lemmas 6.10 and 6.11).
\end{enumerate}

{\em Acknowledgement.}  I would like to thank Amir Shpilka and Partha
Mukhopadhyay for invaluable discussions. I would also like to thank
Alex Samorodnitsky and Seva Lev for early discussions on this problem.

\end{document}